\documentclass[12pt]{amsart}
\usepackage{amscd}
\usepackage{amsmath}
\usepackage{hyperref}

\usepackage{amssymb}

\numberwithin{equation}{section}

\newtheorem{thm}[equation]{Theorem}
\newtheorem{prop}[equation]{Proposition}

\newtheorem{lemma}[equation]{Lemma}

\newtheorem{question}[equation]{Question}

\theoremstyle{definition}
\newtheorem{rem}[equation]{Remark}

\newtheorem{dfn}[equation]{Definition}
\newtheorem{notation}[equation]{Notation}

\newcommand{\Gal}{\mathop{\mathrm{Gal}}}

\newcommand{\CH}{\mathop{\mathrm{CH}}\nolimits}

\newcommand{\Char}{\mathop{\mathrm{char}}\nolimits}

\newcommand{\Z}{\mathbb{Z}}

\newcommand{\A}{\mathbb{A}}

\newcommand{\Q}{\mathbb{Q}}

\newcommand{\Spec}{\operatorname{Spec}}

\newcommand{\Hom}{\operatorname{Hom}}

\renewcommand{\phi}{\varphi}

\newcommand{\WR}{\mathcal{R}}

\DeclareMathAlphabet{\cat}{OT1}{cmss}{m}{sl}

\title
{Weil Restriction and the Motivic Cycle Class Map}

\keywords
{Motives;
Weil restriction;
Motivic cohomology;
Étale realization;
Chow rings.
{\em Mathematical Subject Classification (2020):}
14F42; 14C25}

\author
{Qi Ge}

\address
{Mathematical \& Statistical Sciences \\
University of Alberta \\
Edmonton
\\
CANADA}

\email
{qge@ualberta.ca}

\author
{Guangzhao Zhu}

\address
{Mathematical \& Statistical Sciences \\
University of Alberta \\
Edmonton
\\
CANADA}

\email
{guangzha@ualberta.ca}

\date
{20 Feb 2026}

\begin{document}

\begin{abstract}
We construct the Weil restriction map for $\ell$-adic cohomology and, 
more generally, for mixed Weil cohomology theories. We study its 
compatibility with the motivic cycle class map and show that these 
constructions admit a natural interpretation in the triangulated 
categories of motives. Using Grothendieck's six-functor formalism, 
we prove that the Weil restriction map arises intrinsically from the 
functorial structures of these categories. This provides a conceptual 
framework for understanding the interaction between Weil restriction, 
motivic cohomology, and realization functors.
\end{abstract}

\maketitle

\tableofcontents

\addtocounter{section}{0}

Cohomology theories of algebraic schemes occupy a central position in modern algebraic geometry. A remarkable example is Deligne's proof of the Weil conjecture, in which he made essential use of deep properties of étale cohomology.

This naturally leads to the question of how different cohomology theories are related. As a bridge between such theories, the étale cycle class map provides an initial link between the Chow theory—a Borel--Moore oriented cohomology theory in the sense of Levine and Morel \cite{MR2286826}—and étale cohomology. Moreover, this relationship can be further clarified within the framework of motivic cohomology—conjectured by Beilinson and Lichtenbaum and later constructed by Voevodsky. In this setting, the Chow theory and the étale cohomology are connected naturally as a comparison map arising from the canonical change of site morphism, relating motivic cohomology to Lichtenbaum cohomology. This comparison map is known as the motivic cycle class map.

In this paper, we study the motivic cycle class map from a descent 
point of view. This is achieved by considering an important construction 
in algebraic geometry—the Weil restriction—which provides a powerful 
tool for descent and has numerous applications in arithmetic and geometry. 

The Weil restriction for algebraic cycles, and for Chow groups, was 
established by Karpenko \cite{MR1809664}. In parallel with Karpenko's work, we construct the Weil restriction map for a large class of cohomology theories with good descent properties, including $\ell$-adic cohomology. 

By working in the triangulated categories that give rise to these 
cohomology theories, we observe that our construction is intrinsic, 
since it relies solely on Grothendieck's six-functor formalism 
in these categories. As a result, this paper provides an attempt 
to understand how concrete constructions and phenomena in algebraic 
geometry are governed by the six-functor formalism.

We organize the paper as follows. Section~\S 1 is mainly preliminary, 
in which we recall the construction and basic properties of the 
Weil restriction of schemes and algebraic cycles. In Section~\S 2, 
we show that the motivic cycle class map, viewed as a comparison map, 
is induced by the étale cycle map. In Section~\S 3, we construct 
the Weil restriction map for $\ell$-adic cohomology and prove a 
compatibility theorem. Finally, in Section~\S 4, we generalize our 
construction and compatibility theorem to mixed Weil cohomology theories.

\subsection*{Acknowledgments}
The second author thanks Prof.~Stefan Gille for introducing the problem of Weil restriction. 
The second author also thanks Prof.~Nikita Karpenko for helpful discussions concerning his work on algebraic cycles.

\subsection*{Notation and conventions}
Throughout this paper, schemes are assumed to be separated and of finite type over a field.  
A \emph{variety} means an integral scheme, i.e., an irreducible and reduced scheme.

An {\em algebra} is a commutative algebra and a {\em module} means a module over a commutative ring.

For an abelian group $\Lambda$, we write $\underline{\Lambda}$ for the associated constant étale sheaf.  

For a scheme $X$, $H^{*}(X,\mathcal{F})$ denotes the cohomology groups of a sheaf $\mathcal{F}$ (on the relevant site), and for a closed subscheme $Z \subset X$, $H^{*}_{Z}(X,\mathcal{F})$ denotes the cohomology groups with support in $Z$.

For two sheaves $\mathcal{F}$ and $\mathcal{G}$ on a given site, we write $\Hom(\mathcal{F},\mathcal{G})$ for the group of global morphisms and $\mathcal{H}om(\mathcal{F},\mathcal{G})$ for the internal Hom sheaf.

\section{Preliminary}
\subsection{Weil restriction of schemes}
We briefly review the basics of Weil restriction of schemes. A detailed discussion can be found in \cite{MR4480537} or \cite{MR1045822}.

Let $L/k$ be a finite Galois field extension, and let $X$ be a scheme over $L$.
The \emph{Weil restriction} of $X$ along the extension $L/k$, denoted by
$\WR_{L/k}(X)$ (or simply $\WR(X)$ when no confusion arises),
is a $k$-scheme (if exists) characterized by the following universal property
\[
\Hom_k\bigl(S,\WR(X)\bigr)
\;\cong\;
\Hom_L\bigl(S \times_k L,\, X\bigr),
\]
for any $k$-scheme $S$. Weil restriction always exists when considering quasi-projective schemes.

Under the same assumptions as above, we may describe $\WR(X)_L$ more explicitly. Let $G:=\Gal(L/k)$ be the Galois group corresponding to 
the field extension. For any element $\sigma \in G$, we denote by $X^{\sigma}$ the {\em conjugate scheme} of $X$, which is obtained by composing the structure morphism of $X$ with $\sigma: \Spec(L)\to \Spec(L)$, the field automorphism induced by $\sigma$. Then there is a canonical isomorphism
\begin{equation}
\WR(X)_L \;\cong\; \prod_{\sigma \in G} X^{\sigma}.
\end{equation}

Weil restriction enjoys several functorial properties, we list some of them in the following. 

\begin{prop}[{\cite[Proposition 1.1]{MR1809664}}]
Let $L/k$ be a finite Galois extension, and assume that the Weil restriction of schemes exists in the following setting. Then:
\begin{itemize}
    \item[(1)] For schemes $X,Y$ over $L$, we have
    \[
        \WR(X \times_{L} Y) \cong \WR(X)\times_{k}\WR(Y).
    \]
    \item[(2)] For the closed embedding of schemes over $L$, $i\colon X\hookrightarrow Y$, the induced morphism
    \[
        \WR(i)\colon \WR(X) \to \WR(Y)
    \]
    is also a closed embedding.
    \item[(3)] If $X$ is a smooth scheme over $L$, then $\WR(X)$ is smooth over $k$.
    \item[(4)] If $X$ is a (quasi-)projective scheme over $L$, then $\WR(X)$ is  also  (quasi-)projective over $k$.
\end{itemize}
\end{prop}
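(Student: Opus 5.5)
The plan is to prove all four statements by the same route: verify them after base change to $L$ using the canonical isomorphism $\WR(X)_L\cong\prod_{\sigma\in G}X^{\sigma}$, and then descend to $k$ along the finite, faithfully flat morphism $\Spec L\to\Spec k$. Only (1) is proved directly from the universal property.

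\textbf{Step 1: statement (1).} This is purely formal. For any $k$-scheme $S$ there are natural bijections
\[
\Hom_k\bigl(S,\WR(X\times_L Y)\bigr)\cong\Hom_L\bigl(S_L,X\times_L Y\bigr)\cong\Hom_L(S_L,X)\times\Hom_L(S_L,Y).
\]
The right-hand side is in turn naturally isomorphic to $\Hom_k\bigl(S,\WR(X)\bigr)\times\Hom_k\bigl(S,\WR(Y)\bigr)=\Hom_k\bigl(S,\WR(X)\times_k\WR(Y)\bigr)$. Yoneda's lemma then gives the isomorphism.

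\textbf{Step 2: reduction of (2)--(4) to $L$.} By functoriality of the isomorphism $\WR(X)_L\cong\prod_\sigma X^\sigma$, the base change $\WR(i)_L$ is identified with the product map $\prod_\sigma i^\sigma\colon\prod_\sigma X^\sigma\to\prod_\sigma Y^\sigma$. Each $i^\sigma$ is $i$ with its structure map twisted by $\sigma$, so it is a closed embedding. A finite product of closed embeddings over $L$ is a closed embedding, since it is a composite of base changes of closed embeddings.

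Being a closed embedding is fpqc-local on the target, and $\Spec L\to\Spec k$ is faithfully flat and quasi-compact. Hence $\WR(i)$ is itself a closed embedding, which proves (2).

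The same pattern gives (3). Each $X^\sigma$ is smooth over $L$, because $X^\sigma\to\Spec L$ is $X\to\Spec L$ composed with an isomorphism. Products of smooth schemes are smooth, so $\WR(X)_L$ is smooth over $L$. Smoothness descends along field extensions, which yields (3).

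\textbf{Step 3: statement (4).} Here the plan is the following.

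\begin{itemize}
\item Over $L$, the product $\prod_\sigma X^\sigma$ is (quasi-)projective. Each factor is, and the Segre embedding shows that finite products of (quasi-)projective schemes are (quasi-)projective.
\item Properness and being of finite type descend along $\Spec L\to\Spec k$.
\item To descend projectivity I would construct a $k$-ample line bundle. Take an ample $\mathcal{O}_X(1)$ on $X$. The exterior product $\boxtimes_\sigma p_\sigma^*\mathcal{O}_{X^\sigma}(1)$ on $\prod_\sigma X^\sigma$ is ample and $G$-invariant under the descent datum, since $G$ permutes the factors.
\item Alternatively, take the norm of $p_\tau^*\mathcal{O}(1)$ for a single factor $\tau$ along the finite flat map $\WR(X)_L\to\WR(X)$. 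Either way one obtains a line bundle on $\WR(X)$ whose pullback is ample, and ampleness descends along field extensions. This gives quasi-projectivity, and together with properness, projectivity.
\end{itemize}

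\textbf{Main obstacle.} I expect the hard part to be showing that the exterior product bundle actually descends. Galois descent of a $G$-equivariant line bundle is effective because $\Spec L\to\Spec k$ is a $G$-torsor. The concern is that $\WR(X)$ need not be quasi-projective a priori. If that causes trouble, I would fall back on the norm construction, or on Weil restriction of projective space.
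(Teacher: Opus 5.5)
Your proof is correct. The paper gives no argument of its own for this proposition; it simply quotes Karpenko \cite{MR1809664}. Your route is the standard one. For (1) you use Yoneda. For (2)--(4) you pass to $\WR(X)_L\cong\prod_{\sigma\in G}X^{\sigma}$ through the natural isomorphism and descend along the fpqc morphism $\Spec L\to\Spec k$. This works because closed immersions, smoothness, finite type, separatedness, properness and relative ampleness all descend along such morphisms (EGA~IV$_2$, \S2.7).

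The obstacle you single out is not a real one. Descent of quasi-coherent sheaves, and in particular of line bundles, along a faithfully flat quasi-compact morphism is always effective, with no hypothesis at all on the base. Quasi-projectivity matters only when one descends schemes, that is, when constructing $\WR(X)$ itself, and the existence of $\WR(X)$ is part of the hypotheses.

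One point does deserve an explicit sentence. The bundle $\mathcal{M}=\boxtimes_{\sigma}p_{\sigma}^{*}\mathcal{O}_{X^{\sigma}}(1)$ must carry a genuine descent datum: isomorphisms $\tau^{*}\mathcal{M}\cong\mathcal{M}$ satisfying the cocycle condition, not merely a $G$-invariant isomorphism class. This holds because these isomorphisms are the canonical reindexings of the tensor factors. Your norm variant builds the datum in automatically: for the $G$-torsor $\pi\colon\WR(X)_L\to\WR(X)$ one has $\pi^{*}N_{\pi}\bigl(p_{\tau}^{*}\mathcal{O}(1)\bigr)\cong\bigotimes_{g\in G}g^{*}p_{\tau}^{*}\mathcal{O}(1)\cong\mathcal{M}$. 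So either version finishes (4) without any fallback.
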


\subsection{Weil restriction of algebraic cycles}
Now we may turn to Weil restriction of algebraic cycles. The original construction is given by Karpenko in~\cite{MR1809664}. We briefly recall the construction for later use.

Recall that the \emph{group of algebraic cycles} on a scheme $X$ is defined as the free abelian group generated by all closed subvarieties of $X$. We denote the group by $Z^{*}(X)$ or simply $Z(X)$.
This group carries a natural grading by codimension, and we therefore write
\[
Z^{*}(X) = \bigoplus_{r=0}^{\dim X} Z^{r}(X),
\]
where $Z^{r}(X)$ denotes the subgroup generated by integral closed subschemes of codimension $r$ in $X$. Moreover, by quotienting out by the relation of \emph{rational equivalence} (see \cite{MR1644323}), we obtain the (integral) \emph{Chow group} of $X$, denoted by $\CH^{*}(X)$ (or simply $\CH(X)$). We write $\CH^{r}(X)$ for the $r$th graded piece of $\CH^{*}(X)$, which is the quotient of $Z^{r}(X)$ by rational equivalence.
We describe functorial properties of algebraic cycles with respect to certain morphisms. 

For a flat morphism $f \colon Y \to X$ of constant relative dimension $d$, there is a pullback homomorphism
\[
f^{*} \colon Z^{r}(X) \to Z^{r}(Y),
\]
and hence $f^{*} \colon Z^{*}(X) \to Z^{*}(Y)$.

For a proper morphism $g \colon X \to Y$, there is a pushforward homomorphism
\[
g_{*} \colon Z^{r}(X) \to Z^{r - (\dim X - \dim Y)}(Y),
\]
and thus $g_{*} \colon Z^{*}(X) \to Z^{* - (\dim X - \dim Y)}(Y)$.

Pullbacks and pushforwards are functorial with respect to composition. Moreover, they respect the relation of rational equivalence; therefore, both operations descend to homomorphisms on Chow groups (see \cite[Chapter~I]{MR1644323}).

Now we can start the construction of Weil restriction of algebraic cycles. Let $L/k$ be a finite Galois extension and let $X$ be a smooth quasi-projective scheme over $L$. The Weil restriction $\WR(X)$ exists and is a smooth quasi-projective scheme over $k$ \cite[Corollary~1.4]{MR1809664}. Let $G=\Gal(L/k)$ be the Galois group.

Consider the natural projection morphism
\[
p \colon \WR(X)_{L} \to \WR(X).
\]
It induces an injective pullback homomorphism
\[
p^{*} \colon Z(\WR(X)) \longrightarrow Z(\WR(X)_{L}).
\]
The key point is that the image of $p^{*}$ is precisely the subgroup of $G$-invariant cycles in $Z(\WR(X)_{L})$ (see \cite[Lemma~2.1]{MR1809664}). In other words, there is an isomorphism
\[
p^{*}\bigl(Z(\WR(X))\bigr) \cong Z(\WR(X)_{L})^{G}.
\]

For any algebraic cycle $\alpha \in Z^{r}(X)$ and any $\sigma \in G$, we denote by $\alpha^{\sigma}$ the cycle on $X^{\sigma}$ obtained by base change via $\sigma$. Then
\[
\prod_{\sigma \in G} \alpha^{\sigma}
\]
defines a cycle in $Z\bigl(\WR(X)_{L}\bigr)$ via the exterior product. This cycle is $G$-invariant, and therefore corresponds to a unique cycle $\WR(\alpha) \in Z^{dr}(\WR(X))$ such that
\[
p^{*}\bigl(\WR(\alpha)\bigr) = \prod_{\sigma \in G} \alpha^{\sigma}.
\]

As a result, we obtain a well-defined map
\[
\WR \colon Z^{r}(X) \longrightarrow Z^{d r}\bigl(\WR(X)\bigr),
\]
where $d = [L:k]$. By taking the direct sum over all graded components, we obtain a map (\textbf{Not} a group homomorphism)
\[
\WR \colon Z^{*}(X) \longrightarrow Z^{d\cdot *}(\WR(X)),
\]
which we call the {\em Weil restriction of algebraic cycles}. The Weil restriction map is compatible with flat pullbacks, proper pushforwards, and both interior and exterior products of cycles \cite[Proposition~2.5]{MR1809664}.

Moreover, the Weil restriction of algebraic cycles is compatible with the relation of rational equivalence \cite[Corollary~3.3]{MR1809664}. Hence, the map descends to the level of Chow groups,
\[
\WR \colon \CH(X) \longrightarrow \CH(\WR(X)).
\]
It remains compatible with pushforwards, pullbacks, products of cycles, and Gysin pullbacks on Chow groups. Finally, it is compatible with the intersection product \cite[Proposition~3.4]{MR1809664}. For a modern proof of compatibility of Weil restriction map with the intersection product, we refer the reader to \cite{MR4949893}, where Fulton's \emph{deformation to the normal cone} is used.

\subsection{Étale cohomology and cycle class map}
In this section, we recall some basics on étale cohomology and the construction of the étale cycle class map. This material is covered in standard expositions on étale cohomology; for example, Milne's \emph{Étale Cohomology} \cite{MR4904233}, which treats primarily the case over an algebraically closed field. In fact, the main statements remain valid over an arbitrary base field. For a more general and comprehensive treatment, we refer the reader to \emph{Cycle, SGA~$4\frac{1}{2}$} \cite{MR3727436}.

We fix a base field $k$. Throughout this section, a sheaf is an étale sheaf; any cohomology group will mean étale cohomology group, defined as sheaf cohomology on the étale site unless otherwise specified. Recall that for an abelian group $A$, we denote by $\underline{A}$ the associated constant étale sheaf.

Recall the following definition of \emph{Tate twist}.
\begin{dfn}\cite[\S2, Chapter V]{MR4904233}
\label{df}

Let $X$ be a scheme over $k$ and $\mathcal{F}$ be a sheaf of $\Z/l\Z$-module with $\ell \ne \mathrm{char}(k)$. 
 Let $\mu_{\ell}$ be the sheaf of $\ell$th roots of unity , we define the $r$th \emph{Tate twist} of $\mathcal{F}$ as follows
\begin{equation*}
  \mathcal{F}(r):=\begin{cases}
\mathcal{F}\otimes\mu_n^{\otimes r} & \text{if } r>0,\\
\mathcal{F} & \text{if } r=0,\\
\mathcal{H}om(\mathcal{F}(-r),\,\mathbf \Z/l\Z) & \text{if } r<0.
\end{cases}
\end{equation*}
\end{dfn}
Now we turn to the construction of the étale cycle class map.

Let $X$ be a smooth scheme over $k$, and let $\Lambda := \Z/\ell\Z$ with $\ell \ne \mathrm{char}(k)$. For a smooth closed subvariety $i\colon Z \hookrightarrow X$ of codimension $r$, we have the Gysin map (see \cite[\S5, Chapter~VI]{MR4904233})
\[
i_{*} \colon H^{0}\!\left(Z, i^{*}\underline{\Lambda}\right) \longrightarrow H^{2r}_{Z}\!\left(X, \underline{\Lambda}(r)\right).
\]
Since $H^{0}(Z, i^{*}\underline{\Lambda}) \cong \Z/\ell\Z$, the image of $1 \in H^{0}(Z, i^{*}\underline{\Lambda})$ defines a class in $H^{2r}_{Z}(X, \underline{\Lambda}(r))$. Composing with the canonical map
\[
H^{2r}_{Z}(X, \underline{\Lambda}(r)) \longrightarrow H^{2r}(X, \underline{\Lambda}(r)),
\]
we obtain a cohomology class in $H^{2r}(X, \underline{\Lambda}(r))$, which we denote by $cl_{X}^{r}(Z)$.

For a possibly singular closed subvariety $Z' \subset X$, one can still define such a class using \emph{semi-purity} \cite[Proposition~2.2.6]{MR3727436}, which yields an identification
\[
H^{2r}_{Z'}(X, \underline{\Lambda}(r))
\;\cong\;
H^{2r}_{Z' \setminus Sing(Z')}\!\bigl(X \setminus Sing(Z'), \underline{\Lambda}(r)\bigr),
\]
where $Sing(Z')$ denotes the singular locus of $Z'$. This reduces the construction to the case of a closed embedding of smooth schemes
\[
i'\colon Z' \setminus Sing(Z') \hookrightarrow X \setminus Sing(Z').
\]
Finally, by extending the construction linearly, we obtain the {\em$r$th  étale cycle map}
\begin{equation}
    \label{eqe1}
    cl_{X}^{r} \colon Z^{r}(X) \longrightarrow H^{2r}\!\left(X, \underline{\Lambda}(r)\right).
\end{equation}
By taking the direct sum over all codimensions $r \ge 0$, we obtain the {\em total étale cycle map}
\begin{equation}
    \label{eqe2}
cl_{X} \colon Z^{*}(X) \longrightarrow H^{2*}(X):=\oplus_{i=0} ^{dim(X)}H^{2i}\!\left(X, \underline{\Lambda}(i)\right).
\end{equation}

The étale cycle map enjoys good compatibility with operations on algebraic cycles. We record some of these properties in the following theorem.

\begin{thm}[{\cite[\S9, Chapter~VI]{MR4904233}}]
\label{t1}
Let $X$ be a smooth scheme over $k$. Under the above assumptions, the following hold:
\begin{itemize}
    \item[(1)] For any finite étale morphism $f\colon Y \to X$, the étale cycle map is compatible with the pullback of algebraic cycles along $f$.
    \item[(2)] The étale cycle map is compatible with exterior products of algebraic cycles via the Künneth map in cohomology.
    \item[(3)] For two algebraic cycles that intersect transversely, the étale cycle map is compatible with the intersection product via the cup product in cohomology.
\end{itemize}
\end{thm}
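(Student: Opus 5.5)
The plan is to reduce all three statements to properties of the Gysin map attached to a closed subvariety. For smooth subvarieties these are the local properties of fundamental classes. Because the cycle map is defined by linearity on generators, it suffices to treat a single closed subvariety $Z\subset X$ of codimension $r$. Using semi-purity \cite[Proposition~2.2.6]{MR3727436}, I will then remove $Sing(Z)$ and assume $Z$ is smooth. This step needs a check that each operation commutes with passing to the open complement. That holds because restriction to an open subscheme is injective on $H^{2r}_{Z}$ by semi-purity, and pullback, exterior product and cup product all commute with restriction to opens.

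For (1), let $f\colon Y\to X$ be finite étale. Then $f^{-1}(Z)\to Z$ is finite étale, and $f^*[Z]$ is the reduced cycle $[f^{-1}(Z)]$, a sum of smooth components of codimension $r$ once $Z$ is smooth. By base change for cohomology with supports, $f^*$ carries $H^{2r}_Z(X,\Lambda(r))$ to $H^{2r}_{f^{-1}Z}(Y,\Lambda(r))$. The Gysin map is built from the Thom class in the purity isomorphism $\mathcal H^{2r}_Z(\underline\Lambda(r))\cong \Lambda_Z$, and that isomorphism is étale-local on $X$, so it is compatible with étale base change. Hence $f^*$ sends $i_*(1)$ to $i'_*(1)$. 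Composing with the forget-supports map, which commutes with $f^*$, gives $f^*cl_X(Z)=cl_Y(f^*Z)$.

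For (2), take $Z\subset X$ and $W\subset X'$, both smooth after removing singular loci. The exterior product $Z\times W\subset X\times X'$ is smooth of codimension $r+s$. The Künneth cup product $p_1^*(-)\cup p_2^*(-)$ carries $H_Z\otimes H_W$ into $H_{Z\times W}$. I will compare Thom classes locally: étale-locally $Z\subset X$ and $W\subset X'$ look like coordinate subspaces, and the fundamental class of a product of coordinate subspaces is the cup product of the pulled-back classes, built from first Chern classes of the divisors $x_i=0$. This is the local computation underlying \cite[\S9, Chapter~VI]{MR4904233}. The removal of singular loci has to be done on both factors, and semi-purity for $Z\times W$ inside $(X\setminus Sing Z)\times(X'\setminus Sing W)$ covers it.

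For (3), I will reduce to (2) by the diagonal. If $Z$ and $W$ meet transversely, then $Z\cdot W=\Delta^*(Z\times W)$, where $\Delta\colon X\to X\times X$ is the diagonal, a regular embedding, and cup product equals $\Delta^*$ of the exterior product. It therefore suffices to show that pulling a fundamental class back along a closed immersion transverse to it gives the fundamental class of the scheme-theoretic intersection. Transversality means the normal bundle restricts isomorphically, so the purity isomorphism is compatible with this base change. This compatibility of the Thom class with transverse pullback is the step I expect to be the main obstacle, because it is not étale base change. I would first try to handle it by deformation to the normal cone, or by reducing the codimension-one case to first Chern classes via the Kummer sequence and then inducting on codimension. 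Finally, since each case concerns the classes of components of a transverse intersection, which are smooth and of the expected codimension $r+s$, summing over components finishes the argument.
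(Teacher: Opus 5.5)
The paper gives no argument of its own for this statement---it only cites Milne, Chapter~VI, \S9 (with SGA~4$\frac{1}{2}$, \emph{Cycle}, for the general case). Your outline is essentially the standard argument behind that citation and is sound: semi-purity to pass to smooth supports, then an \'etale-local comparison of fundamental classes through Kummer classes of coordinate divisors.

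The step you flag as the main obstacle in (3) is already covered by the local product formula you invoke in (2). Transversality means that local equations $f_1,\dots,f_r$ of $Z$ and $g_1,\dots,g_s$ of $W$ together form part of a regular system of parameters along $Z\cap W$. Hence $cl_X(Z\cdot W)$ and $cl_X(Z)\cup cl_X(W)$ are both, locally in $H^{2(r+s)}_{Z\cap W}$, the cup product of the same $r+s$ divisor classes, and neither the diagonal nor deformation to the normal cone is needed.
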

\begin{rem}
    In Theorem~\ref{t1}(1), the hypothesis can in fact be weakened to the condition that the codimension of the inverse image of any subvariety of $X$ under a morphism $f \colon Y \to X$ is lower semi-continuous on the fibers. See \cite{MR3727436}.
\end{rem}

In certain situations, for example when the étale cohomology groups 
have coefficients in a field of characteristic zero, the étale cycle 
map extends (with the same image) to a group homomorphism from the Chow group to the étale cohomology group with good functorial properties. 

In this case, we refer to such a map as the {\em étale cycle class map}.

\section{Motivic cycle class map}
\subsection{Comparison map}
Fix a field $k$. Let $\textbf{Sm}_{k}$ denote the category of smooth schemes over $k$. For each integer $q$, there is a complex (the Voevodsky--Suslin complex) of Nisnevich sheaves with transfers, denoted $\Z(q)$, on $\textbf{Sm}_{k}$ (equipped with the Nisnevich topology). The \emph{motivic cohomology} of a smooth scheme $X$ is defined through the hypercohomology $
H^{p,q}(X,\Z) := \mathbb{H}^{p}_{\mathrm{Nis}}\!\bigl(X, \Z(q)\bigr)$. (Indeed, $\Z(q)$ is also a complex of Zariski sheaves. For a detailed discussion, see \cite[Part 3]{MR2242284})

Let $\pi \colon X_{\acute{e}t} \to X_{\mathrm{Nis}}$ be the canonical morphism of sites from the étale site to the Nisnevich site. Using the unit of adjunction
$\mathrm{id} \longrightarrow R\pi_{*}\pi^{*}$,
we obtain a canonical morphism
\begin{equation}
\label{eq}
\Z(q) \longrightarrow R\pi_{*}\pi^{*}\Z(q).
\end{equation}
Since $\pi^{*}\Z(q)$ is the étale sheafification of $\Z(q)$, passing to hypercohomology yields the comparison map
$
\mathbb{H}^{p}_{\mathrm{Nis}}\!\bigl(X,\Z(q)\bigr)
\longrightarrow
\mathbb{H}^{p}_{\acute{e}t}\!\bigl(X,\Z(q)\bigr)
$. The left-hand side is motivic cohomology, while the right-hand side is known as \emph{Lichtenbaum cohomology}, which we denote by $H_{L}^{p,q}(X,\Z)$. Thus we obtain the comparison map $ H^{p,q}(X,\Z) \longrightarrow H_{L}^{p,q}(X,\Z)$.

For any abelian group $A$, define $A(q):=\Z(q)\otimes_{\Z} A$. This gives motivic and Lichtenbaum cohomology with coefficients in $A$, and hence a comparison map in general:
\[
H^{p,q}(X,A) \longrightarrow H_{L}^{p,q}(X,A).
\]

Now consider the case for the group $\Lambda:=\Z/\ell\Z$ with $\ell \ne \mathrm{char}(k)$. There is a quasi-isomorphism in the category of bounded above complexes of étale sheaves
\[
\mathrm{Cone}\bigl(\Z(1)_{\acute{e}t} \xrightarrow{\ \times \ell\ } \Z(1)_{\acute{e}t}\bigr)
\simeq
\mu_{\ell}[0],
\]
 for the étale sheaf of $\ell$th roots of unity $\mu_{\ell}$. It follows that
\[
H_{L}^{p,q}(X,\Z/\ell\Z) \cong H^{p}_{\acute{e}t}\!\left(X,\underline{\Lambda}(q)\right).
\]
where $\underline{\Lambda}(q)$ denotes the $q$th Tate twist for $\underline{\Lambda}:=\underline{\Z/l\Z}$ (See Definition \ref{df}).

When $p=2q$, the comparison theorem for motivic cohomology in weight $q$ implies that
$H^{2q,q}(X,A) \cong \CH^{q}(X)\otimes A$ for an abelian group $A$.
Consequently, we obtain the \emph{motivic cycle class map}
\begin{equation}
\label{eqn}
cl_{\mathrm{mot}}^{q} \colon \CH^{q}(X)\otimes \Lambda \longrightarrow H^{2q}_{\acute{e}t}\!\left(X,\underline{\Lambda}(q)\right).
\end{equation} where $\Lambda=\Z/l\Z$ with $l\ne \Char(k)$.

A natural question is the following.

\begin{question}
Let $X$ be a smooth scheme over $k$. Is the motivic cycle class map 
$cl_{\mathrm{mot}}^{q}$ 
induced by the étale cycle map considered in \S1.3?
\end{question}

We give a positive answer to this question by carefully examining the construction of the comparison morphism from motivic cohomology to Lichtenbaum cohomology due to Geisser and Levine \cite{MR1807268}. Although this result has been known to experts, a detailed reference is hard to find, so we include a proof following their method.

\subsection{Naturality of the motivic cycle class map}
Let $X$ be a smooth scheme over $k$, and let $\Lambda := \Z/\ell\Z$ with $\ell \ne \mathrm{char}(k)$.  
For any integer $q \ge 0$, let $z^{q}(X,*)$ denote Bloch's cycle complex (of Zariski sheaves) computing higher Chow groups \cite[Part~5]{MR2242284}. It is known that, after a suitable reindexing, the complex $z^{q}(X,*)$ is quasi-isomorphic to the Voevodsky--Suslin complex $\Z(q)$ (see \cite[Section~2.5]{MR1807268}). We denote the reindexed complex by $Z^{q}(X,*)$. In particular, its degree-$2q$ term satisfies
$Z^{q}(X,2q) = Z^{q}(X)$ , the group of codimension-$q$ algebraic cycles on $X$.

Let $\pi: X_{\text{ét}} \to X_{Zar}$ be the canonical morphism from the étale site to the Zariski site. Then the construction of Geisser and Levine yields a natural map (\cite[Equation 3.21]{MR1807268}) \begin{equation}
\label{eq2}
    Z^{q}(X,*)\otimes \Lambda \to R\pi_{*}\pi^{*}(\underline{\Lambda}(q))\end{equation}
\begin{prop}\cite[cf.\ Theorem~1.1]{MR1807268}
\label{prop1}
Under the same assumptions as above, the map on hypercohomology induced by \eqref{eq2} coincides with the motivic cycle class map considered in \S2.1 (i.e., \eqref{eqn}).
\end{prop}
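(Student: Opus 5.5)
The plan is to show that the comparison map of \S2.1 (the unit $\mathrm{id}\to R\pi_*\pi^*$ applied to $\Z(q)\otimes\Lambda$) and the Geisser--Levine map \eqref{eq2} agree up to canonical isomorphism in the derived category of Zariski (equivalently Nisnevich) sheaves on $X$. Then we compare both on $\mathbb{H}^{2q}$. Concretely, I would factor \eqref{eq2} as
\[
Z^{q}(X,*)\otimes\Lambda \longrightarrow R\pi_*\pi^*\bigl(Z^{q}(X,*)\otimes\Lambda\bigr)\xrightarrow{\ \sim\ } R\pi_*\pi^*\underline{\Lambda}(q).
\]
Here the first arrow is the adjunction unit. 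The second arrow is $R\pi_*$ of the étale quasi-isomorphism $\pi^*(Z^q(X,*)\otimes\Lambda)\simeq \underline{\Lambda}(q)$. That quasi-isomorphism comes from the quasi-isomorphism $z^q(X,*)\simeq \Z(q)$ recalled above \cite[Section~2.5]{MR1807268}, combined with $\mathrm{Cone}(\Z(1)_{\acute et}\xrightarrow{\ell}\Z(1)_{\acute et})\simeq\mu_\ell$ and its $q$-fold tensor power (Suslin rigidity). The first step is then to check from \cite[Equation~3.21]{MR1807268} that Geisser--Levine's map is indeed built this way: sheafify the cycle complex étale-locally, then apply their identification with $\mu_\ell^{\otimes q}$. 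Naturality of the unit in the complex shows that this factorization is compatible with the map $\Z(q)\otimes\Lambda\to R\pi_*\pi^*(\Z(q)\otimes\Lambda)$ of \eqref{eq}, transported along $Z^q(X,*)\simeq\Z(q)$. Granting this, the two maps agree on all hypercohomology groups.

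The second step is to identify the effect in degree $(2q,q)$ concretely. On the source, $\mathbb{H}^{2q}(X,Z^q(X,*)\otimes\Lambda)=\CH^q(X)\otimes\Lambda$. A cycle $[W]$ of codimension $q$ is represented by the degree-$2q$ term $Z^q(X,2q)=Z^q(X)$. Because it is supported on $W$, its image lies in the local cohomology $\mathbb{H}^{2q}_W$. So it suffices to compute the image of $1$ in $H^{2q}_W(X,\underline{\Lambda}(q))$. By semi-purity \cite[Proposition~2.2.6]{MR3727436}, we may shrink $X$ to $X\setminus Sing(W)$ and assume $W$ is smooth. Using the localization sequences on both sides and purity, both classes then live in $H^0(W,\Lambda)\cong\Lambda$. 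It therefore remains to check that the generator goes to the fundamental class $cl^q_X(W)$ defined via the Gysin map.

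The third step is the normalization, which I expect to be the main obstacle: signs and the choice of isomorphism $\Z(1)_{\acute et}\otimes\Lambda\simeq\mu_\ell$. I would first treat $q=1$. There $\Z(1)\simeq\mathbb{G}_m[-1]$, the motivic class of a divisor $D$ is the class of $\mathcal{O}(D)$ in $H^1(X,\mathbb{G}_m)$, and the comparison map becomes the Kummer boundary $H^1(X,\mathbb{G}_m)\to H^2(X,\mu_\ell)$. This agrees with $cl^1_X(D)$ by the standard description of the Gysin map for divisors \cite[\S6, Chapter~VI]{MR4904233}.

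For general $q$, I would reduce to $q=1$ through two compatibilities. The first is multiplicativity of both constructions: the Geisser--Levine map respects products of cycle complexes, and $cl_X$ is compatible with exterior and intersection products by Theorem~\ref{t1}(2),(3). The second is the deformation argument available locally: a smooth $W$ is Zariski locally a transverse intersection of $q$ smooth divisors. Since supports are already localized, these local computations glue to the global statement.

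Assembling the three steps gives $cl^q_{\mathrm{mot}}([W]\otimes 1)=cl^q_X(W)$. Extending linearly over the generators of $Z^q(X)$ then proves that the map on hypercohomology induced by \eqref{eq2} coincides with the motivic cycle class map \eqref{eqn}.
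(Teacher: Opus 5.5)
Your Step~1 is the paper's proof. Via $Z^{q}(X,*)\simeq\Z(q)$ and $\pi^{*}\Z(q)\otimes^{\mathbf L}\underline{\Lambda}\simeq\underline{\Lambda}(q)$, both you and the paper read \eqref{eq2} as the adjunction unit $\mathrm{id}\to R\pi_{*}\pi^{*}$ followed by an \'etale quasi-isomorphism. Naturality (uniqueness) of the unit then identifies it with \eqref{eq} on all hypercohomology groups, and the paper relies on the same unverified check you grant, namely that Geisser--Levine's map is built this way.

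Steps~2 and~3 are not needed for this proposition, so your closing paragraph should cite Step~1 rather than them. They compute the common map on $\mathbb{H}^{2q}$ and so amount to a proof of Theorem~\ref{com}, which the paper gets from Geisser--Levine's Lemma~3.8 instead of from purity and a reduction to divisors.
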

\begin{proof}
Since $Z^{q}(X,*)$ is quasi-isomorphic to $\Z(q)$ as complexes of Zariski sheaves, and $\pi^{*}\Z(q)\otimes^{\mathbf L}\underline{\Lambda}$ is quasi-isomorphic to $\pi^{*}(\underline{\Lambda}(q))$ as complexes of étale sheaves, we have that the map \eqref{eq2} is induced by the unit of adjunction
\[
\mathrm{id} \longrightarrow R\pi_{*}\pi^{*}
\]
in the derived category of sheaves. Comparing with \eqref{eq}, the uniqueness of the unit of adjunction shows that, after passing to hypercohomology, the map
\[
H^{p,q}(X,\Lambda)
\longrightarrow
H^{p,q}_{L}(X,\Lambda)
= H^{p}_{\acute{e}t}\!\left(X,\Lambda(q)\right)
\]
induced by \eqref{eq2} coincides with the motivic cycle class map considered in \S2.1
\end{proof}

\subsection{Motivic versus étale cycle class map}
We conclude this section by proving the following theorem. We remind the reader that the proof follows directly from the construction of the motivic cycle class map given by Geisser and Levine \cite{MR1807268}.
\begin{thm}
\label{com}
    Let $X$ be a smooth scheme over $k$, and let $\Lambda := \Z/\ell\Z$ with $\ell \ne \mathrm{char}(k)$. Then the motivic cycle class map \eqref{eqn} is obtained from the étale cycle map by tensoring with $\Lambda$ and passing to the quotient by rational equivalence relation.
    \end{thm}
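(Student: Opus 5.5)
By Proposition~\ref{prop1}, it suffices to identify the map on $\mathbb{H}^{2q}$ induced by the Geisser--Levine morphism \eqref{eq2},
\[
Z^{q}(X,*)\otimes\Lambda \to R\pi_{*}\pi^{*}(\underline{\Lambda}(q)),
\]
on the degree-$2q$ term $Z^{q}(X,2q)\otimes\Lambda = Z^{q}(X)\otimes\Lambda$. Concretely, I will show that the composite
\[
Z^{q}(X)\otimes\Lambda \to \mathbb{H}^{2q}_{Zar}\bigl(X, Z^{q}(X,*)\otimes\Lambda\bigr) \to H^{2q}_{\acute{e}t}\bigl(X,\underline{\Lambda}(q)\bigr)
\]
equals $cl^{q}_{X}\otimes\Lambda$ from \eqref{eqe1}. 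The first arrow is the edge map, whose cokernel-type description gives $\CH^{q}(X)\otimes\Lambda$, because the image of the degree-$(2q-1)$ boundary is exactly the cycles rationally equivalent to zero. Once this identity on cycles is established, the statement follows: the étale cycle map necessarily kills rational equivalence, and the induced map on $\CH^{q}(X)\otimes\Lambda$ is $cl^{q}_{\mathrm{mot}}$.

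The key step is to unwind Geisser--Levine's construction of \eqref{eq2}. I recall that they build it from cycle classes with supports. For each codimension-$q$ closed subset $W$, there are the semi-purity isomorphisms
\[
H^{2q}_{W}\bigl(X,\underline{\Lambda}(q)\bigr)\cong \bigoplus_{\text{generic pts of } W}\Lambda,
\]
together with the vanishing of $H^{i}_{W}$ for $i<2q$ \cite[Proposition~2.2.6]{MR3727436}. They use these to realize the degree-$2q$ part of the cycle complex as the colimit over $W$ of $\mathcal{H}^{2q}_{W}(\underline{\Lambda}(q))$, extended to higher cubes via cycles on $X\times\Delta^{n}$ with proper intersection. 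So the image of an irreducible $Z\subset X$ of codimension $q$ is the element of $H^{2q}_{Z}(X,\underline{\Lambda}(q))$ corresponding to $1$ at the generic point of $Z$, followed by forgetting supports.

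It then remains to compare this with the class in \S1.3. After shrinking to $X\setminus Sing(Z)$, which is harmless by semi-purity, that class is the Gysin image $i_{*}(1)$. The fundamental class $i_{*}(1)$ is the generator of $H^{2q}_{Z}\cong\Lambda$ fixed by purity, so the two classes agree provided both conventions normalize the generator the same way. I expect this normalization check to be the main obstacle. Geisser--Levine's local classes might differ from Milne/SGA~$4\frac12$ by a sign or a unit. I would settle it by reducing, via transversality and multiplicativity (Theorem~\ref{t1}(3)), to the case $q=1$ of a smooth divisor. There, both constructions are given by the Kummer boundary of the class of $\mathcal{O}(Z)$ in $\mathrm{Pic}$, and Geisser--Levine's $\Z(1)\simeq\mathbb{G}_m[-1]$ matches the Kummer sequence used in \S2.1. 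Finally, linearity and passage to the quotient by rational equivalence finish the argument.
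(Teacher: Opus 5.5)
Your proposal follows essentially the paper's route. You reduce via Proposition~\ref{prop1} to the Geisser--Levine map, note that on $Z^{q}(X)\otimes\Lambda$ it is built from cycle classes with supports (the paper invokes \cite[Lemma~3.8]{MR1807268} for the square comparing $Z^{q}_{W}(X)\otimes\Lambda\to H^{2q}_{W}$ with $Z^{q}(X)\otimes\Lambda\to H^{2q}$), and identify the supported class of $W$ with the étale cycle class.

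The normalization you flag as the main obstacle is in fact automatic. The semi-purity identification of $H^{2q}_{W}(X,\underline{\Lambda}(q))$ that you invoke is itself defined by the cycle class; this is what the paper means by saying the top row ``is nothing more than the étale cycle map modulo $\Lambda$.'' Your fallback via Theorem~\ref{t1}(3) would also not work as stated:
\begin{itemize}
\item $Z$ is only locally a transverse intersection of divisors;
\item restriction to such a neighbourhood is injective on $H^{2q}_{Z}$ but not on $H^{2q}(X)$, which is where Theorem~\ref{t1}(3) lives;
\item you would additionally need multiplicativity of the Geisser--Levine classes with supports.
\end{itemize}
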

\begin{proof}
To prove the theorem, it suffices to verify the statement on each graded component of the Chow group of $X$. Recall that $Z^{q}(X,*)$ is the reindexed Bloch cycle complex, with $Z^{q}(X)\otimes\Lambda=Z^{q}(X,2q)\otimes \Lambda$. By Proposition~\ref{prop1}, we reduce to determining the image of the natural map (cf. \eqref{eq2})
\begin{equation}
\label{e1}
    \eta: Z^{q}(X,*)\otimes \Lambda \to R\pi_{*}\pi^{*}(\underline{\Lambda}(q))  
\end{equation}
As a complex of étale sheaves, $\pi^{*}\!\bigl(\underline{\Lambda}(q)\bigr)$ is quasi-isomorphic to $G^{*}\!\bigl(X,\underline{\Lambda}(q)\bigr)$, the Godement resolution of $\underline{\Lambda}(q)$ (\cite[Example 3.3]{MR1807268}). Hence the map $\eta$ is reduced to \begin{equation}
    Z(X,*)\otimes \Lambda \to G^{*}(X,\underline{\Lambda}(q)).\end{equation} (See \cite[\S3.7]{MR1807268})
which yields a map $cl^{q}: Z^{q}(X)\otimes \Lambda \to H^{2q}(G^{*}(X,\underline{\Lambda}(q)))$.

For any closed subvariety $W \subset X$ of codimension $q$, let 
$Z^{q}_{W}(X)$ denote the group of codimension-$q$ cycles on $X$ supported in $W$, and let 
$G^{*}_{W}\!\bigl(X,\underline{\Lambda}(q)\bigr)$ denote the Godement resolution of $\underline{\Lambda}(q)$ with support in $W$. 
Clearly, $W$ defines an element of $Z^{q}_{W}(X)$, and we have a natural inclusion $Z^{q}_{W}(X) \subset Z^{q}(X)$.

From \cite[Lemma~3.8]{MR1807268}, we obtain the following commutative diagram:
\[
\begin{CD}
Z^{q}_{W}(X)\otimes \Lambda 
  @>{}>>
H^{2q}\!\bigl(G^{*}_{W}(X),\underline{\Lambda}(q)\bigr) \\
@VVV @VVV \\
Z^{q}(X)\otimes \Lambda 
  @>{cl^{q} }>>
H^{2q}\!\bigl(G^{*}(X),\underline{\Lambda}(q)\bigr),
\end{CD}
\]
where the vertical arrows are the natural maps.

Viewing $W$ as a cycle in both $Z^{q}_{W}(X)$ and $Z^{q}(X)$, note that the upper part of the digram is nothing more than the étale cycle map modulo $\Lambda$, we conclude that
\[
cl^{q}(W) = \bigl(cl^{q}_{X}\otimes \Lambda\bigr)(W),
\]
which completes the proof.
    
\end{proof}

\section{Weil restriction and the motivic cycle class map}
\subsection{Weil restriction on $\ell$-adic cohomology}
In this section, we construct the Weil restriction on $\ell$-adic cohomology. Unless otherwise specified, all cohomology groups will refer to étale cohomology groups.

We begin by recalling some basic notions. Fix a field $k$ and let $X$ be a scheme over $k$. Let $\ell$ be a prime number with $\ell \ne \mathrm{char}(k)$, and set
\[
\Lambda_{n} := \Z/\ell^{n}\Z.
\]
The {\em$\ell$-adic cohomology of $X$ with coefficients in $\Q_{\ell}$} is defined by
\[
H^{i}\!\left(X,\Q_{\ell}(r)\right)
:= \left(\varprojlim_{n} H^{i}\!\left(X,\underline{\Lambda_{n}}(r)\right)\right)\otimes_{\Z_{\ell}}\Q_{\ell},
\]
where the inverse limit is taken with respect to the transition maps
\[
\underline{\Lambda_{n+1}}(r) \longrightarrow \underline{\Lambda_{n}}(r)
\]
induced by the natural map $\Z/l^{n+1}\Z \to \Z/l^{n}\Z$.

\begin{notation}
To simplify notation, we will refer to $\ell$-adic cohomology with coefficients in $\Q_{\ell}$ simply as $\ell$-adic cohomology. Also, we will always assume that $\ell \ne \Char k$. This will be used in the rest of the paper.
\end{notation}

There are two reasons to consider $\ell$-adic cohomology. First, the étale cycle class maps \eqref{eqe1} and \eqref{eqe2} do not, in general, extend to Chow groups without additional hypotheses on the dimension of the ambient scheme. In contrast, for $\ell$-adic cohomology, such an extension is always possible \cite[Remark 10.8, \S10, Chapter VI]{MR4904233}.
Second, $\ell$-adic cohomology has good descent properties, which are described in the following lemma.

\begin{lemma}
\label{inv}
 Let $L/k$ be a finite Galois extension with Galois group $G$, and let $X$ be a scheme over $k$. Then, for all integers $i$ and $r$, there is a canonical isomorphism
\[
H^{i}\!\left(X_{L},\Q_{\ell}(r)\right)^{G}
\;\cong\;
H^{i}\!\left(X,\Q_{\ell}(r)\right).
\]
\end{lemma}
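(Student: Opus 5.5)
The plan is to use the Hochschild--Serre spectral sequence for the finite Galois étale cover $p\colon X_L \to X$, whose group of deck transformations is $G$. For each $n$ and each torsion coefficient sheaf $\underline{\Lambda_n}(r)$ there is a spectral sequence
\[
E_2^{a,b} = H^{a}\!\bigl(G, H^{b}(X_L,\underline{\Lambda_n}(r))\bigr) \Longrightarrow H^{a+b}\!\bigl(X,\underline{\Lambda_n}(r)\bigr),
\]
which is compatible with the transition maps $\Lambda_{n+1}\to\Lambda_n$. The edge morphism $H^{i}(X,\underline{\Lambda_n}(r)) \to H^{i}(X_L,\underline{\Lambda_n}(r))^{G}$ is the pullback $p^{*}$. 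Its image lands in the $G$-invariants because $p\circ\sigma = p$ for every $\sigma\in G$. This pullback is the canonical map in the statement.

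Next we pass to the limit. Where the groups $H^{b}(X_L,\underline{\Lambda_n}(r))$ are finite (for example, when $k$ is separably closed or finite, or more generally under finiteness hypotheses), the inverse systems are Mittag-Leffler. Then $\varprojlim_n$ is exact on these systems, and we obtain a spectral sequence of $\Z_\ell$-modules with $E_2^{a,b}=H^{a}(G, H^{b}(X_L,\Z_\ell(r)))$ converging to $H^{a+b}(X,\Z_\ell(r))$. In general, to avoid $\varprojlim^1$ issues, I would instead work with continuous (Jannsen) étale cohomology, or with Ekedahl's $\ell$-adic formalism. There the Hochschild--Serre spectral sequence exists directly for the $\ell$-adic sheaf $\Z_\ell(r)$, since $Rp_{*}$ and derived $G$-invariants commute with $R\varprojlim$. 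I expect this limit step to be the main technical obstacle. The cleanest route is to view $R\Gamma(X,-) = R\Gamma(G, R\Gamma(X_L,-))$ at the level of pro-systems of complexes and then apply $R\varprojlim$.

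Finally we tensor with $\Q_\ell$. The key fact is that $H^{a}(G,M)$ is annihilated by $|G|$ for every $a>0$ and every $G$-module $M$, since restriction to the trivial subgroup followed by corestriction is multiplication by $|G|$. Hence after $\otimes_{\Z_\ell}\Q_\ell$, which is exact, all rows $a>0$ of $E_2$ vanish. The spectral sequence then degenerates and gives the isomorphism
\[
H^{i}(X,\Q_\ell(r)) \xrightarrow{\ \sim\ } \bigl(H^{i}(X_L,\Z_\ell(r))\otimes\Q_\ell\bigr)^{G} = H^{i}(X_L,\Q_\ell(r))^{G},
\]
where invariants commute with $\otimes\Q_\ell$ because $G$ is finite.

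Alternatively, we can avoid spectral sequences entirely and only use the transfer $p_{*}$ (trace). We have $p_{*}p^{*}=|G|$ on $H^{i}(X,-)$, and $p^{*}p_{*}=\sum_{\sigma\in G}\sigma^{*}$ on $H^{i}(X_L,-)$. After inverting $\ell$, and hence $|G|$ on $\Q_\ell$-coefficients, the maps $p^{*}$ and $\tfrac{1}{|G|}p_{*}$ are mutually inverse between $H^{i}(X,\Q_\ell(r))$ and the $G$-invariants. This argument is compatible with the limit over $n$ and requires only the trace formalism for finite étale maps. I would present this second argument as the main proof, since it sidesteps convergence issues.
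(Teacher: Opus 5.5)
Your proof is correct. The argument you designate as the main one takes a different route from the paper.

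The paper applies the Hochschild--Serre spectral sequence for $X_L\to X$ directly with $\Q_\ell(r)$-coefficients. It kills all columns $p>0$ because the cohomology of the finite group $G$ with coefficients in a uniquely divisible module vanishes. Your first sketch is the same idea, but run at the finite levels $\underline{\Lambda_n}(r)$, with the passage to the limit confronted explicitly.

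Your main argument instead uses the trace of the finite étale cover at each level $n$: $p_*p^*=|G|$ and $p^*p_*=\sum_{\sigma\in G}\sigma^*$. These identities are natural in $n$, so they survive $\varprojlim_n$, and you then invert $|G|$ after $\otimes_{\Z_\ell}\Q_\ell$. This is exactly the mechanism the paper itself uses later for Theorem~\ref{t5} via Lemma~\ref{trr}.

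What your route buys is that it works verbatim with the paper's actual definition $H^i(X,\Q_\ell(r))=\bigl(\varprojlim_n H^i(X,\underline{\Lambda_n}(r))\bigr)\otimes_{\Z_\ell}\Q_\ell$. Under this definition $\Q_\ell(r)$ is not an étale sheaf, so a Hochschild--Serre spectral sequence ``with $\Q_\ell$-coefficients'' is not literally available. The paper's short proof passes over this limit step in silence. It can be repaired: at each finite level the kernel and cokernel of the edge map are killed by a power of $|G|$ depending only on $i$, and this property is preserved by $\varprojlim$ and $\varprojlim^1$. The paper's route, in turn, is shorter and presents descent as a direct consequence of the vanishing of higher group cohomology.

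One caution about your fallback to continuous (Jannsen) cohomology. Those groups differ from $\varprojlim_n H^i(X,\underline{\Lambda_n}(r))$ by a $\varprojlim^1$ term. That variant would therefore prove the statement for a possibly different group unless you add a comparison. This is another reason to lead with the transfer argument, as you propose.
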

\begin{proof}
Note that the projection morphism $X_{L}\to X$ is a finite Galois covering morphism (In the sense of  \cite[Remark 5.4, \S5, Chapter I]{MR4904233}). Hence we have the Hoschild-Serre spectral sequence\[E_{2}^{p,q}=H^{p}(G,H^{q}(X_{L}, \Q_{l}(r)))\Rightarrow H^{p+q}(X,\Q_{l}(r)).\] (\cite[Theorem 2.20,\S2, Chapter III]{MR4904233}).

Since $H^q(X_{L},\Q_{l}(r))$ is a vector space over $\Q_{l}$, which is uniquely $n$-divisible, we have that $E^{p,q}_{2}=0$ except for the term $p=0$ (\cite[Corollary 3.3.9, Chapter III]{MR3727161}). Hence we have \[H^{q}(X,\Q_{l}(r))\cong H^{0}(G,H^{q}(X_{L},\Q_{l}(r)))=H^{q}(X_{L},\Q_{l}(r))^{G}\] This finishes our proof.
\end{proof}

We now turn to the construction of the Weil restriction on $\ell$-adic cohomology. Let $L/k$ be a finite Galois extension with Galois group $G$, and let $X$ be a smooth quasi-projective scheme over $L$. We assume that $[L:k]=n$. Still, we denote by $\WR(X)$ the Weil restriction of $X$ along $L/k$.

For any cohomology class $f \in H^{i}\!\left(X,\Q_{\ell}(r)\right)$ and any $\sigma \in G$, we denote by $f^{\sigma}$ the corresponding class in
\[
H^{i}\!\left(X^{\sigma},\Q_{\ell}(r)\right),
\]
where $X^{\sigma}$ is the conjugate scheme as defined in \S~1.1.

Since $X$ is quasi-projective, one can describe $f^{\sigma}$ explicitly using Čech cohomology (see \cite[Theorem~2.17, \S2, Chapter~III]{MR4904233}). Let $U \to X$ be an étale covering morphism, and set
\[
U^{[i+1]} := \underbrace{U \times_X U \times_X \cdots \times_X U}_{(i+1)\text{ times}}.
\]
Then a class $f \in H^{i}\!\left(X,\Q_{\ell}(r)\right)$ can be represented by a Čech $i$-cocycle, that is, by a section
\[
f_i \in \Gamma\!\left(U^{[i+1]}, \Q_{\ell}(r)\right)
\]
satisfying the usual Čech cocycle condition.
The section $f_i$ should be viewed as an étale-locally constant section with values in the sheaf $\Q_{\ell}(r)$. Then $f_i^{\sigma}$ corresponds to the same Čech cocycle, but with its values in $\Q_{\ell}(r)$ twisted by the natural Galois action of $\sigma$ on the Tate twist.

Consider the following Künneth product homomorphism (\cite[\S8, Chapter VI]{MR4904233}): 

\begin{equation}
    \phi: \prod_{\sigma \in G}H^{i}(X^{\sigma},\Q_{l}(r)) \to H^{ni}(\prod_{\sigma \in G}X^{\sigma},\Q_{l}(nr))
\end{equation}

Now let $f \in H^{i}\!\left(X,\Q_{\ell}(r)\right)$ be a cohomology class. Define
\[
f' := \phi\!\left(\prod_{\sigma \in G} f^{\sigma}\right)
\in H^{ni}\!\left(\prod_{\sigma \in G} X^{\sigma}, \Q_{\ell}(nr)\right),
\] Using Čech representatives, one checks that $f'$ is $G$-invariant. Since
\[
\WR(X)_{L} \cong \prod_{\sigma \in G} X^{\sigma},
\]
Lemma~\ref{inv} implies that $f'$ descends to a unique class in
\[
H^{ni}\!\left(\WR(X), \Q_{\ell}(nr)\right),
\]
which we denote by $\WR(f)$.

This construction defines a map
\[
\WR \colon H^{i}\!\left(X,\Q_{\ell}(r)\right)
\longrightarrow
H^{ni}\!\left(\WR(X),\Q_{\ell}(nr)\right),
\]
called the Weil restriction on $\ell$-adic cohomology.

\subsection{Compatibility}
In this section, we prove the compatibility between the Weil restriction on Chow groups and that on $\ell$-adic cohomology, formulated via motivic cohomology and the motivic cycle class map. Note that the étale cycle map \eqref{eqe1} (also for \eqref{eqe2}) is compatible with the passage to $\ell$-adic cohomology. Hence they extend to maps
\begin{equation}
\label{elq}
cl^{q}_{-} \colon Z^{q}(-) \longrightarrow H^{2q}\!\left(-,\Q_{\ell}(q)\right)
\end{equation}
for any smooth $k$-scheme.
Moreover, the map \eqref{elq} is compatible with rational equivalence. Hence it descends to a homomorphism
\[
cl_{-}^{'\,q} \colon \CH^{q}(-) \longrightarrow H^{2q}\!\left(-,\Q_{\ell}(q)\right).
\]
with the property that  $cl_{-}^{q}$ and $cl_{-}^{'\,q}$ have the same image (\cite[\S10, Chapter VI]{MR4904233}).

\begin{notation}
    We will, by abuse of notation, use $cl^{q}_{-}$ to denote the $q$th étale cycle (class) map defined on both algebraic cycles and Chow groups.
\end{notation}
Our main result in this section is the following theorem.

\begin{thm}
\label{tt}
Let $L/k$ be a finite Galois extension of degree $n$ and $X$ be a smooth quasi-projective scheme over $L$. For any integer $q$ with $0 \le q \le \dim X$, the following diagram is commutative:
\[
\begin{CD}
H^{2q,q}(X,\Q_{\ell}) @>{cl_{\mathrm{mot}}^{q}}>> H^{2q,q}_{L}(X,\Q_{\ell}) \\
@V{\WR}VV @VV{\WR}V \\
H^{2nq,nq}\!\left(\WR(X),\Q_{\ell}\right) 
  @>{cl_{\mathrm{mot}}^{nq}}>> 
H^{2nq,nq}_{L}\!\left(\WR(X),\Q_{\ell}\right),
\end{CD}
\]
where the left vertical map is induced by the Weil restriction on Chow groups, and the right vertical map is the Weil restriction on $\ell$-adic cohomology via the standard identifications in motivic cohomologies.
\end{thm}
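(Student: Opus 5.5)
The plan is to reduce the square to a statement about cycles and then use the descent characterisations on both sides. Via the identification $H^{2q,q}(X,\Q_\ell)\cong \CH^q(X)\otimes\Q_\ell$, Theorem~\ref{com} (passed to the $\ell$-adic limit as in \eqref{elq}) identifies $cl^{q}_{\mathrm{mot}}$ with the étale cycle class map $cl^q_X$. The left vertical arrow is Karpenko's $\WR$ on Chow groups. So it suffices to prove
\[
cl^{nq}_{\WR(X)}\bigl(\WR(\alpha)\bigr)=\WR\bigl(cl^q_X(\alpha)\bigr)
\]
for every $\alpha\in Z^q(X)$, or more precisely for every class in $\CH^q(X)$.

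Two caveats need attention here. First, $\WR$ is not additive, so "induced by" on $\CH^q(X)\otimes\Q_\ell$ must be read on pure tensors, or on the image of $\CH^q(X)$. I would state this interpretation explicitly and work with classes of cycles. Second, Karpenko's map descends along rational equivalence (Corollary 3.3 of \cite{MR1809664}), and $cl$ factors through $\CH$. Hence it is enough to check the identity on cycles $\alpha\in Z^q(X)$.

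The key step is descent. By Lemma~\ref{inv}, pullback $p^*\colon H^{2nq}(\WR(X),\Q_\ell(nq))\to H^{2nq}(\WR(X)_L,\Q_\ell(nq))^G$ is an isomorphism, and in particular injective. So it suffices to compare the two sides after applying $p^*$.

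On the cohomological side, $p^*\WR(cl^q_X(\alpha))=\phi\bigl(\prod_\sigma cl^q_X(\alpha)^\sigma\bigr)$ holds by definition.

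On the cycle side, $p\colon \WR(X)_L\to\WR(X)$ is finite étale. Theorem~\ref{t1}(1) gives
\[
p^*cl^{nq}_{\WR(X)}(\WR(\alpha))=cl^{nq}_{\WR(X)_L}\bigl(p^*\WR(\alpha)\bigr)=cl^{nq}_{\prod X^\sigma}\Bigl(\prod_\sigma\alpha^\sigma\Bigr),
\]
using Karpenko's defining property of $\WR(\alpha)$ together with the isomorphism $\WR(X)_L\cong\prod_\sigma X^\sigma$. Theorem~\ref{t1}(2), compatibility of $cl$ with exterior products via the Künneth map, iterated over the $n$ factors, turns this into $\phi\bigl(\prod_\sigma cl^q_{X^\sigma}(\alpha^\sigma)\bigr)$.

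It remains to show $cl^q_{X^\sigma}(\alpha^\sigma)=cl^q_X(\alpha)^\sigma$, i.e. that the cycle class map commutes with conjugation by $\sigma$. I expect this to be the main obstacle, because the paper defines $f^\sigma$ only loosely through Čech cocycles. My approach is to realise conjugation as transport of structure along the isomorphism of abstract schemes $X^\sigma\to X$, which is the identity on underlying schemes but not $L$-linear. The cycle class is built from the Gysin map and semi-purity, both of which are intrinsic to the scheme and the sheaf $\mu_{\ell^m}$, so they are preserved under any scheme isomorphism. The only thing to track is that the identification of $\mu_{\ell^m}^{\otimes q}$ on $X$ with that on $X^\sigma$ is exactly the Galois twist used to define $f^\sigma$. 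Granting this, both sides agree after $p^*$, injectivity gives equality on $\WR(X)$, and tensoring with $\Q_\ell$ yields commutativity of the square.
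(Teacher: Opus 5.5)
Your proposal is correct and follows essentially the same route as the paper: reduce via Theorem~\ref{com} to cycles, descend along $p\colon \WR(X)_L\to\WR(X)$ using Lemma~\ref{inv} and Karpenko's characterisation $p^{*}\WR(\alpha)=\prod_{\sigma}\alpha^{\sigma}$, and conclude from the compatibility of $cl$ with exterior products and with pullback along the isomorphism $\sigma\colon X^{\sigma}\to X$. You also use Theorem~\ref{t1}(1) for the finite \'etale map $p$ together with injectivity of $p^{*}$, which makes precise a step that the paper's reduction to diagram~\eqref{1} leaves implicit; likewise, your caveat about the non-additivity of $\WR$ raises the question, which the paper does not discuss, of how the left vertical arrow is defined on $\Q_{\ell}$-coefficients.
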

Under the assumptions of Theorem~\ref{tt}, we may apply Theorem~\ref{com} together with the identifications
\[
H^{2*,*}(X,\Q_{\ell}) \cong \CH^{*}(X)\otimes \Q_{\ell}
\quad\text{and}\quad
H^{2*,*}_{L}(X,\Q_{\ell}) \cong H^{2*}\!\left(X,\Q_{\ell}(*)\right)
\]
to reduce the proof to that of the following proposition.
\begin{prop}
Under the assumptions of Theorem~\ref{tt}, for any $q\in [0,dim(X)]$, we have the following commutative diagram
\[
\begin{CD}
Z^{q}(X) @>{cl^{q}_{X}}>> H^{2q}\!\left(X,\mathbb{Q}_{\ell}(q)\right) \\
@V{\WR}VV @VV{\WR}V \\
Z^{nq}\!\left(\WR(X)\right) @>{cl^{nq}_{\WR(X)}}>> 
H^{2nq}\!\left(\WR(X),\mathbb{Q}_{\ell}(nq)\right)
\end{CD}
\]
\end{prop}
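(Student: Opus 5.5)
Both vertical maps are defined by descent along $p\colon \WR(X)_L \to \WR(X)$, so the plan is to reduce commutativity to an identity on $\WR(X)_L \cong \prod_{\sigma\in G} X^{\sigma}$. There we will use the multiplicativity of the cycle map under exterior products (Theorem~\ref{t1}(2)). Concretely, fix $\alpha\in Z^{q}(X)$. By Lemma~\ref{inv}, the pullback
\[
p^{*}\colon H^{2nq}\!\left(\WR(X),\Q_{\ell}(nq)\right)\to H^{2nq}\!\left(\WR(X)_L,\Q_{\ell}(nq)\right)^{G}
\]
is an isomorphism, in particular injective. It therefore suffices to show
\[
p^{*}\,cl^{nq}_{\WR(X)}(\WR(\alpha)) \;=\; p^{*}\,\WR(cl^{q}_{X}(\alpha)) \;=\; \phi\Bigl(\prod_{\sigma} cl^{q}_{X}(\alpha)^{\sigma}\Bigr),
\]
where the second equality is the definition of $\WR$ on $\ell$-adic cohomology.

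The first step is to commute $cl$ with $p^{*}$. The morphism $p$ is a base change of $\Spec L\to\Spec k$, so it is finite étale. By Theorem~\ref{t1}(1), passed to $\ell$-adic coefficients via \eqref{elq}, we get $p^{*}cl^{nq}_{\WR(X)}(\WR(\alpha)) = cl^{nq}_{\WR(X)_L}(p^{*}\WR(\alpha))$. By the defining property of Karpenko's construction, this equals $cl^{nq}_{\WR(X)_L}\bigl(\prod_\sigma \alpha^{\sigma}\bigr)$. Both sides are additive in the underlying cycle for fixed $\WR(\alpha)$, so no linearity issue arises, even though $\WR$ itself is not a homomorphism.

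The second step uses Theorem~\ref{t1}(2). Compatibility of the cycle map with exterior products via the Künneth map $\phi$ gives $cl_{\prod X^{\sigma}}\bigl(\prod_\sigma\alpha^{\sigma}\bigr)=\phi\bigl(\prod_\sigma cl_{X^{\sigma}}(\alpha^{\sigma})\bigr)$; for more than two factors we iterate the two-factor statement. It remains to check the conjugation compatibility
\[
cl^{q}_{X^{\sigma}}(\alpha^{\sigma}) = cl^{q}_{X}(\alpha)^{\sigma}.
\]
For this, note that $X^{\sigma}$ and $X$ are the same scheme with different structure maps to $\Spec L$. The cycle $\alpha^\sigma$ is $\alpha$ viewed on $X^\sigma$, and the class $f^{\sigma}$ is $f$ viewed on $X^\sigma$, with the Tate twist identified through $\sigma$. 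The cycle class is built from the Gysin map and semi-purity, both of which are intrinsic to the closed immersion $Z\hookrightarrow X$ and the étale site of $X$. Hence transport of structure along the isomorphism of schemes underlying $X^\sigma\to X$ matches the two classes. This is compatible with the identification of $\mu_{\ell^m}$ on $X$ and on $X^\sigma$, since both are the same étale sheaf on the same underlying scheme.

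I expect the main obstacle to be this last conjugation step, together with making sure the identification $f\mapsto f^{\sigma}$ in the construction of $\WR$ agrees with transport of structure, including on the twist $\Q_\ell(q)$. The paper describes $f^\sigma$ via Čech cocycles with twisted coefficients. I would first rephrase $f^{\sigma}$ as the pullback along the (non-$L$-linear) isomorphism of schemes $X^{\sigma}\to X$, and then invoke the naturality of the Gysin map under isomorphisms. A secondary point is that Theorem~\ref{t1}(1) is stated for cycles modulo $\ell$; we apply it at each finite level $\Lambda_m$ and pass to the limit, which is compatible because \eqref{elq} is defined exactly this way.
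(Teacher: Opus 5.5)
Your proposal is correct and follows essentially the same route as the paper. Both arguments use Lemma~\ref{inv} (injectivity of $p^{*}$ onto the $G$-invariants) to reduce to an identity on $\WR(X)_L\cong\prod_{\sigma\in G}X^{\sigma}$, then conclude from the compatibility of the cycle map with exterior products and with pullback along the canonical isomorphism $\sigma\colon X^{\sigma}\to X$; you additionally spell out the commutation $p^{*}\circ cl^{nq}_{\WR(X)}=cl^{nq}_{\WR(X)_L}\circ p^{*}$ (Theorem~\ref{t1}(1) for the finite \'etale $p$), which the paper's reduction uses only implicitly.
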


\begin{proof}
Let $G=\Gal(L/k)$. Using Lemma \ref{inv} and \cite[Lemma 2.1]{MR1809664}, we have that $Z^{nq}(X)\cong Z^{nq}(\prod_{\sigma}X^{\sigma})^{G}$ and $H^{2nq}(\WR(X),\Q_{l}(nq))=H^{2nq}(\prod_{\sigma\in G}X^{\sigma},\Q_{l}(nq))^{G}$. Moreover, from the construction of the Weil restriction map, we have that $\WR: Z^{q}(X)\to Z^{nq}(\WR(X))$ factors through the map\[\phi: Z^{q}(X)\to Z^{nq}(\prod_{\sigma\in G}X^{\sigma}); \alpha \mapsto \prod_{\sigma\in G}\alpha^{\sigma}\] while the map $\WR: H^{2q}(X,\Q_{l}(q))\to H^{2nq}(\WR(X),\Q_{l}(nq))$ factors through the map\[\psi: H^{2q}(X,\Q_{l}(q)) \to H^{2nq}(\prod_{\sigma\in G}X^{\sigma},\Q(nq));f \mapsto \prod_{\sigma}f^{\sigma}.\]

As a result, to proves the commutativity of the diagram in the proposition, it suffices to prove the the commutativity of the following diagram
\begin{equation}
 \label{1}   
\begin{CD}
Z^{q}(X) @>{cl_{X}^{q}}>> H^{2q}(X,\mathbb{Q}_{\ell}(q)) \\
@V{\phi}VV @V{\psi}VV \\
Z^{nq}\!\left(\prod_{\sigma\in G} X^{\sigma}\right)
@>{cl_{\prod X^{\sigma}}^{nq}}>>
H^{2nq}\!\left(\WR(X),\mathbb{Q}_{\ell}(nq)\right)
\end{CD}
\end{equation}
Now for an algebraic cycle $\alpha \in Z^{q}(X)$, the upper part of the diagram \ref{1} gives a cycle $\prod_{\sigma \in G}(cl_{X}^{q}(\alpha))^{\sigma}$ while the lower part gives a cycle $cl_{\prod X^{\sigma}}^{nq}(\prod_{\sigma \in G} \alpha^{\sigma})$. Since the étale cycle map is compatible with exterior product, we have that \[cl_{\prod X^{\sigma}}^{nq}(\prod_{\sigma \in G} \alpha^{\sigma})=\prod_{\sigma}cl_{X^{\sigma}}(\alpha^{\sigma}).\]
Now the statement follows from the compatibility of étale cycle class map with flat pullback (Theorem \ref{t1}) applied to the canonical isomorphism $\sigma: X^{\sigma}\to X$.
\end{proof}

\section{A Generalization to mixed Weil theory}
In this section, we give a conceptual explanation of the preceding construction. As a consequence, we generalize our results to a broad class of cohomology theories satisfying properties analogous to those of étale cohomology, namely the mixed Weil theories introduced and studied by F. Déglise. The key tool is Grothendieck’s six-functor formalism in the setting of $\Q$-linear stable model categories. 

\subsection{Mixed Weil theory}
We first recall some basic definitions and properties of mixed Weil theories. For a comprehensive discussion, we refer the reader to \cite{MR2900540} and \cite{MR3971240}. 

We fix a perfect field $k$ and let $K$ be a field of characteristic zero. 
For a scheme $S$, let $\textbf{Sm}_{S}$ be the category of smooth $S$-schemes, and let
$S_{\mathrm{Nis}}$ denote the Nisnevich site on $\textbf{Sm}_S$. 

We write $D^{\mathrm{eff}}_{\mathbb{A}^1}(S,K)$ for the \emph{triangulated category of real effective motives} over $S$. Briefly, the category $D^{\mathrm{eff}}_{\mathbb{A}^{1}}(S,K)$ is obtained from the derived
category of complexes of Nisnevich sheaves of $K$-modules on $S_{\mathrm{Nis}}$
by localizing the homotopy relation $X\times \mathbb{A}^{1} \sim  X$. For any smooth $S$-scheme $X$, let $K(X)$ denote the {\em motive} of $X$ in
$D^{\mathrm{eff}}_{\mathbb{A}^1}(S,K)$ associated to the Nisnevich presheaf
$U\mapsto K[\Hom_S(U,X)]$ (viewed as a complex concentrated in degree $0$).

We denote by $K(1)$ the {\em Tate motive} (See \cite[\S 1.3]{MR2900540}) in $D^{\mathrm{eff}}_{\mathbb{A}^1}(S,K)$. Then $K(n)$ denotes the $n$-fold tensor product of the Tate motive for any non-negative integer $n$. Note that the category $D^{\mathrm{eff}}_{\mathbb{A}^{1}}(S,K)$ 
carries a natural symmetric monoidal structure induced by the derived 
tensor product of complexes of sheaves. For any integer $n \ge 0$, 
we define the {\em $n$th twist} of $K(X)$ by 
\begin{equation}
\label{111}
    K(X)(n) := K(X) \otimes K(n),
\end{equation}

A \emph{mixed Weil theory} over $S$ with coefficients in $K$ is a complex of Nisnevich presheaves 
of graded $K$-algebras on $S_{\mathrm{Nis}}$ which satisfies the following properties: $\mathbb{A}^{1}$-locality, 
Nisnevich excision, the dimension axiom, stability, and the Künneth formula 
(see \cite[\S 2.1.2]{MR2900540}).

For any smooth $S$-scheme $X$ and a mixed Weil theory $E$, we define the ($q$th) {\em mixed Weil cohomology group} of $X$ with respect to $E$ by 
\begin{equation}
\Hom_{D^{\mathrm{eff}}_{\mathbb{A}^{1}}(S,K)}
\bigl(K(X),E[q]\bigr)\end{equation} for any integer $q$.

By abuse of notation, we also write $E$ for the corresponding object in 
$D^{\mathrm{eff}}_{\mathbb{A}^{1}}(S,K)$ whenever the context makes clear 
which underlying category is being considered. Using the defining properties of a mixed Weil theory $E$, for any smooth $S$-scheme $X$ and any integer $q$, we get the following canonical isomorphism :\begin{equation}
    \mathbb{H}_{\mathrm{Nis}}^{q}(E(X))\cong \Hom_{D^{\mathrm{eff}}_{\mathbb{A}^{1}}(S,K)}
\bigl(K(X),E[q]\bigr)\end{equation}
where the left-hand side is the $i$th Nisnevich hypercohomology group of $X$ with respect to $E$.

We may consider the Tate twists of a mixed Weil cohomology group. For a mixed Weil theory $E$ and a smooth $S$-scheme $X$, the {\em $p$th Tate twist of the mixed Weil cohomology group} is defined by 
\begin{equation}
\label{222}
H^{*}(X,E)(p) := H^{*}\bigl(X,E(p)\bigr)\end{equation}

In the above discussion, we restricted ourselves to non-negative integers 
when considering Tate twists. This limitation can be removed by passing to 
a larger category into which 
$D^{\mathrm{eff}}_{\mathbb{A}^{1}}(S,K)$ embeds \textbf{fully faithfully}. 
By stabilizing the category of real effective motives with respect to the 
Tate motive, we obtain the desired category, namely the 
\emph{triangulated category of real mixed motives}, denoted by $D_{\mathbb{A}^{1}}(S,K)$.

A key observation made by F.Déglise is that, any mixed Weil theory $E$ canonically gives rise to a commutative ring spectrum, which we denote by $\mathcal{E}$. With the same notations, for a smooth $S$-scheme $X$ and integers $p,q \in \mathbb{Z}$, 
we define the {\em $q$-th cohomology group of $X$ with twist $p$ 
and coefficients in $\mathcal{E}$} by
\[
H^{q}(X,\mathcal{E}(p))
:=
\Hom_{D_{\mathbb{A}^{1}}(S,K)}
\bigl(K(X),\mathcal{E}(p)[q]\bigr),
\]
where $\underline{K}(X)$  denotes the stabilized 
motive of $X$ in $D_{\mathbb{A}^{1}}(S,K)$. Moreover, we have $H^{q}(X,E)=H^{q}(X,\mathcal{E})$.

Given a mixed Weil theory $E$ and its associated commutative ring spectrum $\mathcal{E}$, a corresponding symmetric monoidal category of $\mathcal{E}$-modules is built up from $D_{\mathbb{A}^{1}}(S,K)$. Moreover, it is endowed with the stable $\A^{1}$-weak equivalences (See \cite[\S1.5.1 \& \S2.1.5]{MR2900540}). We denote this category by $D_{\A^{1}}(S,\mathcal{E})$. In general, we have the following isomorphism relates $D_{\A^{1}}(S,\mathcal{E})$ with $D^{\mathrm{eff}}_{\mathbb{A}^{1}}(S,K)$:
\[\Hom_{D^{\mathrm{eff}}_{\mathbb{A}^{1}}(S,K)}(K(X),E[q]\cong \Hom_{D_{\A^{1}}(S,\mathcal{E})}(\underline{K}(X),\mathcal{E}[q])\] for any integer $q$ and any smooth $S$-scheme $X$.

We summarize the main structural properties of 
$D_{\mathbb{A}^{1}}(S,\mathcal{E})$ proven by F.Déglise in the following theorem.

\begin{thm}[{\cite[\S 2]{MR2900540}}]
\label{tt7}
Let $S$ be a scheme and $K$ a field of characteristic zero. We let $E$ be a mixed Weil theory and $\mathcal{E}$ its associated commutative ring spectrum.

\begin{itemize}

\item[(1)] The triangulated category $D_{\mathbb{A}^{1}}(S,\mathcal{E})$ admits 
Grothendieck's six-functor formalism in the sense of 
\cite[\S Introduction~A.5.1]{MR3971240}. 
Moreover, it satisfies the axioms of absolute purity and duality 
(\cite[\S 2]{MR2900540}).

\item[(2)] Let $k$ be a perfect field and set $S=\Spec(k)$. 
Let $DM_{gm}(k,\Q)$ denote Voevodsky's triangulated category of geometric motives with rational coefficients. 
There exists a symmetric monoidal triangulated functor
\[
R_\mathcal{E} : DM_{gm}(k,\Q) \longrightarrow D_{\mathbb{A}^{1}}(S,\mathcal{E}),
\]
called the realization functor associated to $\mathcal{E}$. 
For any smooth $k$-scheme $X$, this functor induces the {\em motivic cycle class map}
\[
cl_{X}^{q}:H^{q}(X,\Q(p)) \longrightarrow H^{q}(X,\mathcal{E}(p)),
\]
where $H^{q}(X,\Q(p))$ denotes the motivic cohomology groups considered in~\S2. 
\end{itemize}
\end{thm}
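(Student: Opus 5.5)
The statement is essentially a summary of Déglise's results, so the proof will assemble cited constructions rather than argue from first principles. I will treat the two parts separately and in order.

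For (1), I will first recall that Déglise shows that $D_{\mathbb{A}^{1}}(-,K)$, as a fibred category over schemes, is a motivic triangulated category in the sense of Cisinski--Déglise. This means it satisfies the localization property, $\mathbb{A}^{1}$-homotopy invariance, and $\mathbb{P}^{1}$-stability, which yields the six functors by \cite[A.5.1]{MR3971240}. Next, since $\mathcal{E}$ is a commutative monoid in the underlying model category, I will pass to modules over it. The key point is that the category of $\mathcal{E}$-modules is again a monoidal model category (with the stable $\A^{1}$-weak equivalences), and that the free module functor $M\mapsto \mathcal{E}\otimes M$ commutes with $f^{*}$, $f_{\#}$ for smooth $f$, and with the tensor product. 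Using Cisinski--Déglise's general result that modules over a cartesian commutative ring spectrum inherit the localization axiom, I will conclude that $D_{\A^{1}}(-,\mathcal{E})$ is again motivic, so the six functors exist. Absolute purity and duality should then transfer from $D_{\mathbb{A}^{1}}(-,K)$, where they hold rationally by Ayoub and Cisinski--Déglise, via the same free-module functor. The step I expect to be the main obstacle is the localization property for $\mathcal{E}$-modules, since it requires $\mathcal{E}$ to be cartesian, i.e.\ $f^{*}\mathcal{E}_{S}\simeq\mathcal{E}_{T}$. Déglise obtains this from the Künneth and stability axioms of a mixed Weil theory, and I would cite that argument rather than reprove it.

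For (2), with $S=\Spec k$ perfect, I will use the rational comparison $DM_{gm}(k,\Q)\hookrightarrow DM(k,\Q)\simeq D_{\mathbb{A}^{1}}(k,\Q)$, which holds because rationally $\mathbb{A}^{1}$-derived motives with transfers agree with those without. I will then compose with extension of scalars $\Q\to K$ and with the free functor $-\otimes\mathcal{E}$; each step is symmetric monoidal and triangulated, and the composite is $R_{\mathcal{E}}$. Applying $R_{\mathcal{E}}$ to the groups $\Hom(M(X),\Q(p)[q])$ and using adjunction gives
\[
\Hom_{D_{\A^{1}}(S,\mathcal{E})}\bigl(\mathcal{E}\otimes \underline{K}(X),\mathcal{E}(p)[q]\bigr)\cong \Hom_{D_{\A^{1}}(S,K)}\bigl(\underline{K}(X),\mathcal{E}(p)[q]\bigr)=H^{q}(X,\mathcal{E}(p)).
\]
Finally, I will check that this map is compatible with the identification $H^{q}(X,\Q(p))=\Hom_{DM}(M(X),\Q(p)[q])$ recalled in \S2. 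This check is routine, since $R_{\mathcal{E}}$ sends $M(X)$ to $\mathcal{E}\otimes\underline{K}(X)$ and sends $\Q(1)$ to $\mathcal{E}(1)$.
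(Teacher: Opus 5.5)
The paper gives no argument for this statement; it is quoted from Cisinski--D\'eglise, so your reconstruction has to stand on its own. Its overall shape is reasonable: modules over a pulled-back commutative ring spectrum for (1), and the free $\mathcal{E}$-module functor for (2). However, the construction of $R_{\mathcal{E}}$ in (2) rests on a false step. The equivalence $DM(k,\Q)\simeq D_{\mathbb{A}^1}(k,\Q)$ does not hold for all perfect fields. By Morel, $\mathrm{End}_{D_{\mathbb{A}^1}(k,\Q)}(\mathbf{1})\cong \mathrm{GW}(k)\otimes\Q$, while $\mathrm{End}_{DM(k,\Q)}(\mathbf{1})=\Q$. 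These differ whenever $k$ is formally real, e.g.\ $k=\mathbb{R}$ or $k=\Q$, where $W(k)\otimes\Q\neq 0$. What is true is the splitting $D_{\mathbb{A}^1}(k,\Q)\simeq D_{\mathbb{A}^1}(k,\Q)_{+}\times D_{\mathbb{A}^1}(k,\Q)_{-}$, with only the plus part equivalent to $DM(k,\Q)$. The symmetric monoidal comparison functor goes from $D_{\mathbb{A}^1}(k,\Q)$ to $DM(k,\Q)$ (adding transfers, $M\mapsto M\otimes H\Q$). Its right adjoint is fully faithful but sends $\mathbf{1}$ to $H\Q=\mathbf{1}_{+}\neq\mathbf{1}$. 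So your composite either does not exist as written, or it sends the unit to $\mathcal{E}\otimes H\Q=\mathcal{E}_{+}$ and fails to be unital.

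The missing idea is that $\mathcal{E}$ itself lies in the plus part. Equivalently, it is orientable and hence an algebra over $H\Q$ (Beilinson motivic cohomology), and this is precisely where the mixed Weil axioms are needed. For example, the switch on $\mathbb{G}_m\wedge\mathbb{G}_m$ represents Morel's $\epsilon$. By stability and K\"unneth, $\tilde H^{2}(\mathbb{G}_m\wedge\mathbb{G}_m,E)\cong \tilde H^{1}(\mathbb{G}_m,E)^{\otimes 2}\cong K$, and graded commutativity of $E$ forces the switch to act on this group by $-1$. Hence $\epsilon=-1$ on $\mathcal{E}$ and $\mathcal{E}=\mathcal{E}_{+}$. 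Then $M\mapsto\mathcal{E}\otimes M$ factors through $D_{\mathbb{A}^1}(k,\Q)\to D_{\mathbb{A}^1}(k,\Q)_{+}\simeq DM(k,\Q)$. Restricting to $DM_{gm}(k,\Q)$ gives a genuinely symmetric monoidal $R_{\mathcal{E}}$, and your adjunction computation of the cycle class map then goes through.

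The same input is what makes (1) work. The absolute purity theorem of Cisinski--D\'eglise is proved for Beilinson motives $DM_{B}=D_{\mathbb{A}^1}(-,\Q)_{+}$. Purity therefore passes to $\mathcal{E}$-modules because $\mathcal{E}$ is an $H_{B}$-algebra, not via the free functor from all of $D_{\mathbb{A}^1}(-,K)$. Finally, the cartesian property you flag as the main obstacle does not come from K\"unneth and stability. It is arranged by setting $\mathcal{E}_X:=f^{*}\mathcal{E}$, after which localization for $\mathcal{E}$-modules follows formally from the Cisinski--D\'eglise results on modules over cartesian ring spectra.
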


\subsection{Galois descent}
The main goal of this subsection is to prove analogues of Lemma~\ref{inv} 
for motivic cohomology with rational coefficients and for mixed Weil 
cohomology. 

The main difficulty is that, unlike in the case of $\ell$-adic cohomology, 
there is no immediate descent statement arising from a Hochschild--Serre 
spectral sequence since a finite Galois covering morphism is 
generally far from being a Nisnevich covering.

Fortunately, the full six-functor formalism available in the triangulated 
categories defining these cohomology theories provides the necessary 
tool to establish the desired Galois descent results. More importantly, 
this shows that our construction in \S3.1 is intrinsic.

We fix a perfect field $k$ and let $K$ be a field of characteristic zero. Let $E$ be a mixed Weil theory and $\mathcal{E}$ its associated commutative ring spectrum.

We will adopt the following notation.
\begin{notation}
Let $DM_{gm}(k,\Q)$ denote Voevodsky's triangulated category of 
geometric motives with rational coefficients, and let 
$D_{\mathbb{A}^{1}}(k,\mathcal{E})$ denote the triangulated category 
of $\mathcal{E}$-modules over $S=\Spec(k)$ with coefficients in $K$, 
as considered in \S 4.1.

For a smooth $k$-scheme $X$, we denote by $M(X)$ its associated motive 
in either $DM_{gm}(k,\Q)$ or $D_{\mathbb{A}^{1}}(k,\mathcal{E})$; 
the ambient category will be clear from the context.

Similarly, we write $H^{i}(X,-)$ for the cohomology groups computed 
in either $DM_{gm}(k,\Q)$ or $D_{\mathbb{A}^{1}}(k,\mathcal{E})$.
\end{notation}

Let $L/k$ be a finite Galois extension with Galois group $G:=\Gal(L/k)$. 
Let $X$ be a smooth $k$-scheme, and denote by 
\[
f : X_L := X \times_{\Spec(k)} \Spec(L) \longrightarrow X
\]
the finite Galois covering morphism.

\begin{thm}
\label{t5}
Under the above assumptions, for all integers $p,q$, there are canonical isomorphisms
\[
H^{p}(X_{L},\Q(q))^{G}
\;\cong\;
H^{p}(X,\Q(q)),
\]
and
\[
H^{p}(X_{L},\mathcal{E}(q))^{G}
\;\cong\;
H^{p}(X,\mathcal{E}(q)).
\]
Here the $G$-action on $H^{p}(X_{L},-)$ is induced by the natural action 
of $G$ on $X_{L}$ via the automorphisms
\[
g : X_{L} \longrightarrow X_{L},
\qquad g \in G.
\]
\end{thm}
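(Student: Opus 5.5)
The plan is to use only the adjunctions of the six-functor formalism for the finite étale morphism $f\colon X_L\to X$, together with the fact that the coefficients are $\Q$-linear. First, since $f$ is finite étale, we have $f_! = f_*$ and $f^! = f^*$. The adjunction $f^*\dashv f_*$ then gives
\[
H^{p}(X_L,\mathcal{E}(q)) = \Hom\bigl(M(X_L),\mathcal{E}(q)[p]\bigr)\cong \Hom\bigl(f_\sharp \mathbb{1}_{X_L}, \mathbb{1}_X\bigr)\text{-type expressions over } X.
\]
More precisely, working over the base $X$ (or directly with motives over $k$), the motive $M(X_L)$ is $M(X)$ twisted by the Artin motive of $L$. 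I would phrase everything through $f_*f^*\mathbb{1}_X$, writing $H^p(X_L,\mathcal{E}(q)) \cong \Hom(\mathbb{1}_X, f_*f^*\mathcal{E}_X(q)[p])$.

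The group $G$ acts on $X_L$ over $X$, hence on the object $f_*f^*\mathbb{1}_X$. This action is compatible with the induced action on cohomology described in the statement.

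Next I would establish the key structural facts. The composite of the unit $\mathbb{1}_X\to f_*f^*\mathbb{1}_X$ with the trace map $f_*f^*\mathbb{1}_X = f_*f^!\mathbb{1}_X\to \mathbb{1}_X$ (the counit of $f_!\dashv f^!$) equals multiplication by $n=[L:k]$. Meanwhile, the reverse composite $f_*f^*\mathbb{1}\to\mathbb{1}\to f_*f^*\mathbb{1}$ equals $\sum_{g\in G} g$. I would check this second identity after pulling back along $f$ itself, since $X_L\times_X X_L\cong\coprod_{g\in G}X_L$. There the statement becomes a computation on a trivial covering, and proper base change reduces it to a sum over the sheets. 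Faithfulness of $f^*$ on these Hom groups is not automatic, so instead I would verify the identity directly via the base change isomorphism $f^*f_*\cong \bigoplus_{g} g^*$ combined with adjunction.

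Since all coefficients are $\Q$-linear (respectively $K$-linear), $e:=\frac1n\sum_g g$ is an idempotent on $f_*f^*\mathbb{1}_X$. The two identities above then show that $\mathbb{1}_X$ is the image of $e$, i.e.\ $\mathbb{1}_X$ is a direct summand of $f_*f^*\mathbb{1}_X$ cut out by $e$. This requires the category to be idempotent complete (pseudo-abelian), which holds for $D_{\A^1}$ since it has countable sums; for $DM_{gm}$ with rational coefficients the summand already exists because it is the image of the unit. Applying $\Hom(\mathbb{1}_X,-\otimes\mathcal{E}(q)[p])$ converts this into the conclusion that $H^p(X,\mathcal{E}(q))$ equals the image of $e$ on $H^p(X_L,\mathcal{E}(q))$, which is exactly the $G$-invariants. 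Concretely, the pullback $f^*$ is injective and lands in the invariants, and $\frac1n f_*$ provides a retraction onto it. The motivic case runs identically in $DM(k,\Q)$; alternatively, on $M(X_L)\to M(X)$ one can use the transfer of the finite correspondence ${}^tf$ directly.

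I expect the main obstacle to be proving $f^*f_* = \sum_g g$ on cohomology, i.e.\ that pullback after pushforward equals the sum of the Galois translates. I would attack it with the cartesian square $X_L\times_X X_L \rightrightarrows X_L$, using base change $f^*f_*\cong p_{2*}p_1^*$ and the decomposition of the fibre product into the graphs of the $g\in G$. Verifying that the trace is compatible with this decomposition (the degree formula $f_*f^*=n$) is where the axioms of purity and duality from Theorem~\ref{tt7}(1) are needed.
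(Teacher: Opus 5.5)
Your proposal is correct and follows the paper's proof: both rest on the two identities $f_*f^*=|G|$ (giving injectivity of $f^*$, Lemma~\ref{trr}) and $f^*f_*\cong\mathrm{pr}_{1*}\mathrm{pr}_2^*=\sum_{g\in G}g^*$, obtained from base change along $X_L\times_X X_L\cong\coprod_{g\in G}X_L$ (giving surjectivity onto the invariants). The only difference is in how $f_*f^*=|G|$ is obtained: the paper cites Galois descent for motives (Cisinski--D\'eglise, Corollary~3.3.39), while you derive it from the trace of the finite \'etale map, so your idempotent argument in effect reproves that descent statement rather than assuming it. Note that this trace identity really comes from orientation of $DM(k,\Q)$ and of $\mathcal{E}$ (it fails in the minus part of $\Q$-linear stable homotopy) rather than from purity and duality, and your alternative identity $f\circ{}^tf=n\cdot\mathrm{id}$ of finite correspondences, transported by $R_{\mathcal{E}}$, supplies it directly.
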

Before proving Theorem~\ref{t5}, we establish the following lemma 
concerning the transfer homomorphism in the same setting.

\begin{lemma}
\label{trr}
Let $L/k$ be a finite Galois extension with Galois group $G$. For a smooth $k$-scheme $X$, we let $f : X_L \longrightarrow X $ 
be the finite Galois covering morphism. Then for any cohomology class $\alpha \in H^{i}(X,-)$, we have
\[
f_{*} f^{*}(\alpha) = |G| \cdot \alpha.
\]
Here $f^{*}$ denotes the pullback and $f_{*}$(coincides with $f_{!}$ when $f$ proper) the pushforward provided by the six-functor formalism.
\end{lemma}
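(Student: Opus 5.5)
The identity $f_*f^*(\alpha)=|G|\cdot\alpha$ will be proved entirely inside the six-functor formalism (Theorem~\ref{tt7}(1), and the analogous structure on $DM_{gm}(k,\Q)$). We reduce it to a statement about a single endomorphism of the unit object $\mathbb{1}_X$, and then compute that endomorphism by base change to $L$, where the covering splits.

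First we make the composite concrete. Since $f$ is finite étale, it is proper and smooth of relative dimension $0$, so $f_!=f_*$ and $f^!=f^*$, with no Tate twist. A class $\alpha\in H^i(X,\mathcal{E}(q))$ is a morphism $\mathbb{1}_X\to\mathcal{E}_X(q)[i]$. Its pullback is $f^*\alpha\colon \mathbb{1}_{X_L}\to f^*\mathcal{E}_X(q)[i]$. The pushforward (transfer) $f_*f^*\alpha$ is the composite of three maps: the unit $\mathbb{1}_X\to f_*f^*\mathbb{1}_X$, then $f_*f^*\alpha$, then the counit (trace) $f_*f^*=f_!f^!\to\mathrm{id}$. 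By naturality of the unit and the counit, this composite equals $\alpha\circ\tau$, where
\[
\tau\colon \mathbb{1}_X\to f_*f^*\mathbb{1}_X\to \mathbb{1}_X
\]
is the "degree" endomorphism. So it suffices to show $\tau=|G|\cdot\mathrm{id}$ in $\operatorname{End}(\mathbb{1}_X)$.

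Second, we compute $\tau$ after base change. Unit and trace are compatible with base change, by the proper base change isomorphism and the compatibility of the purity isomorphism with pullback. Hence $f_L^*\tau=\tau'$, where $\tau'$ is the degree map for the base-changed covering $f'\colon X_L\times_X X_L\to X_L$. Now $X_L\times_X X_L\cong X\times_k(L\otimes_k L)\cong\coprod_{g\in G}X_L$, so $f'$ is a trivial covering. By additivity of the six functors over a disjoint union of copies mapping isomorphically, $f'_*f'^*\mathbb{1}\cong\bigoplus_{g\in G}\mathbb{1}$. The unit is then the diagonal, the trace is the sum map, and so $\tau'=|G|\cdot\mathrm{id}$.

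Third, we descend from $X_L$ back to $X$. I expect this to be the main obstacle, because pullback $f^*$ on endomorphisms of the unit need not a priori be injective. I would avoid this issue as follows. The base-change step shows that the trace map is compatible with the explicit splitting, and the trace of a finite étale map factors Zariski-locally through the degree. Since $X$ is connected, or working componentwise, the endomorphism $\tau$ is determined by its restriction to generic points. Over a field, $\operatorname{End}(\mathbb{1}_{\Spec k})$ maps injectively into $\operatorname{End}(\mathbb{1}_{\Spec L})$ after rationalization: the coefficients are $\Q$-linear or $K$-linear with $K$ of characteristic $0$, and $\operatorname{End}(\mathbb{1})$ contains $\Q$ compatibly with pullback.

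A cleaner alternative is to invoke directly the degree formula that comes with the six-functor formalism for finite étale morphisms of constant degree $d$, namely $f_*f^*=d$ on cohomology (cf. \cite{MR3971240}, degree formula for the trace). The base-change computation above is exactly the verification of that formula. Here $d=[L:k]=|G|$, which concludes the proof.
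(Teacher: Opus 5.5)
\textbf{There is a genuine gap in your third step.} Your first two steps are sound. The reduction of $f_*f^*\alpha$ to $\alpha\circ\tau$, with $\tau=\mathrm{tr}\circ\eta\in\operatorname{End}(\mathbf{1}_X)$, is correct. The split computation giving $f^*\tau=|G|$ is in substance the identity $f^*f_*=\sum_{g\in G}g^*$ used in the proof of Theorem~\ref{t5}.

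The third step is where the whole content of the lemma lies. Knowing $f^*\tau=|G|$ says nothing about $\tau$ unless $f^*$ is injective on $\operatorname{End}(\mathbf{1}_X)$, and your argument for this does not hold up.
\begin{itemize}
\item The claim that $\tau$ is determined by its restriction to the generic points is asserted without proof. It is not a formal consequence of the six functors: for $KGL_{\Q}$-modules, $\operatorname{End}(\mathbf{1}_X)=K_0(X)_{\Q}$ does not inject into its generic value.
\item The field-level step is a non sequitur. Knowing that $\operatorname{End}(\mathbf{1})$ contains $\Q$ compatibly with pullback only shows that pullback is injective on $\Q\cdot\mathrm{id}$. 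You would therefore need to know in advance that $\tau$ is a scalar there. For $\mathcal{E}$-modules the endomorphism ring of the unit over a connected base need not be $K$: for the $\ell$-adic theory, $H^0(X,\mathcal{E})=\Q_\ell^{\pi_0(X_{\bar k})}$.
\item The remark that the trace ``factors Zariski-locally through the degree'' is not meaningful, since $X_L\to X$ does not split Zariski-locally.
\item Invoking a general degree formula just restates the lemma.
\end{itemize}

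\textbf{Fix via Galois descent.} The missing ingredient is the one the paper's proof is built on, Galois descent \cite[Corollary~3.3.39]{MR3971240}. The unit $\eta\colon M\to f_*f^*M$ identifies $M$ with $(f_*f^*M)^G$. Since $|G|$ is invertible, this is the image of the idempotent $\frac{1}{|G|}\sum_{g\in G}g$. Hence $\eta$ is a split monomorphism, and so $f^*$ is faithful: if $f^*x=0$, then $\eta\circ x=(f_*f^*x)\circ\eta=0$, so $x=0$. Applying this to $x=\tau-|G|\cdot\mathrm{id}$ completes your argument; this is, in substance, what the paper's appeal to the projector accomplishes.

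\textbf{Alternative fix.} Your scalar idea works if you run it over $\Spec k$ rather than over the generic point of $X$.
\begin{itemize}
\item The map $f$ is the base change of $u\colon\Spec L\to\Spec k$ along $\pi\colon X\to\Spec k$. By the same base-change compatibility you already use, $\tau=\pi^*\tau_k$.
\item Here $\tau_k$ lies in $\operatorname{End}(\mathbf{1}_{\Spec k})$, which is $H^{0}(\Spec k,\Q(0))=\Q$ for motives, resp.\ $H^0(\Spec k,\mathcal{E})=K$ by the dimension axiom.
\item Your split computation gives $u^*\tau_k=|G|$. This forces $\tau_k=|G|$, provided $\operatorname{End}(\mathbf{1}_{\Spec L})\neq 0$.
\end{itemize}
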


\begin{proof}
Recall that for a smooth $k$-scheme $X$, we denote by $M(X)$ its associated motive 
in either $DM_{gm}(k,\Q)$ or $D_{\mathbb{A}^{1}}(S,\mathcal{E})$. We may prove both statements simutaneously since $DM_{gm}(k,\Q)$  satisfies the six-functor formalism \cite[Theorem 11.4.5]{MR3971240}.

Each element $g\in G$ induces an automorphism $g:X_L\to X_L$, hence an 
endomorphism $g^{*}$ of $M(X_L)$ and, by functoriality, an endomorphism of 
$f_*f^{*}M(X)=f_{*}M(X_{L})$.

Since $f$ is a finite Galois covering morphism and $X$ is smooth, Galois descent for motives \cite[Corollary~3.3.39]{MR3971240}  now yields a canonical isomorphism
\[
M(X) \;\xrightarrow{\ \sim\ }\; (f_*f^*M(X))^G
\]

Since $|G|$ is invertible in the field of coefficients of the underlying category, $(f_{*}f^{*}M(X))^{G}$ is given explicitly by the projector 
\[
p:=\frac{1}{|G|}\sum_{g\in G} g
\]
where $g$ denotes the Galois action on $f_{*}f^{*}M(X)$ induced by $g: X_{L} \to X_{L}$.

With the identifications, we pass to cohomology groups and get
\[
f_*f^*(\alpha)=|G|\cdot \alpha,
\] for any $\alpha\in H^{i}(X,-)$
as claimed.
\end{proof}

Now we are ready to prove the main result.

\begin{proof}[Proof of Theorem~\ref{t5}]
Consider the Cartesian diagram
\[
\begin{CD}
X_{L}\times_{X}X_{L} @>{\mathrm{pr}_{2}}>> X_{L} \\
@V{\mathrm{pr}_{1}}VV                      @VV{f}V \\
X_{L}              @>{f}>>               X
\end{CD}
\]

By the exchange isomorphism in the six-functor formalism, we obtain
\[
f^{*}f_{*} \cong \mathrm{pr}_{1*}\mathrm{pr}_{2}^{*}.
\]
Since $f$ is a finite Galois covering with group $G$, there is a natural
isomorphism
\[
X_{L}\times_{X}X_{L}\cong \bigsqcup_{g\in G} X_{L}.
\]
Under this identification, the functor $\mathrm{pr}_{1*}\mathrm{pr}_{2}^{*}$
corresponds to the direct sum of the pullbacks $g^{*}$.
Hence
\[
f^{*}f_{*} = \sum_{g\in G} g^{*}.
\]

Now let $\beta \in H^{p}(X_L,-)^{G}$. Then
\[
f^{*}f_{*}(\beta)
= \sum_{g\in G} g^{*}(\beta)
= |G|\cdot \beta,
\]
since $\beta$ is $G$-invariant. As $|G|$ is invertible in the coefficient
field, we obtain
\[
\beta = \frac{1}{|G|} f^{*}f_{*}(\beta),
\]
which shows that $f^{*}$ is surjective onto the invariant part.

Injectivity follows from Lemma~\ref{trr}, which asserts that
$f_{*}f^{*} = |G|\cdot \mathrm{id}$ in the level of cohomology groups.
Therefore,
\[
H^{p}(X_L,-)^{G} \cong H^{p}(X,-),
\]
as claimed.
\end{proof}

\subsection{Generalized compatibility}
To conclude the paper, we will construct the Weil restriction map for both motivic cohomology and mixed Weil cohomology in light of \S 3.1. 
We then prove a generalized compatibility theorem.

We fix a perfect field $k$ and let $K$ be a field of characteristic zero. 
Let $E$ be a mixed Weil theory and let $\mathcal{E}$ denote its associated 
commutative ring spectrum. Recall that $DM_{gm}(k,\Q)$ denotes Voevodsky's 
triangulated category of geometric motives with rational coefficients. 
We write $D_{\mathbb{A}^{1}}(k,\mathcal{E})$ for the triangulated category 
of $\mathcal{E}$-modules over $S=\Spec(k)$ with coefficients in $K$.

To simplify notation, for a smooth $k$-scheme $Z$ and an integer 
$i \in \Z$, we write $H^{i}(Z)(p)$ for the corresponding cohomology groups with $p$th Tate twist,
computed either in $DM_{gm}(k,\Q)$ or in 
$D_{\mathbb{A}^{1}}(k,\mathcal{E})$, according to the context.

Now we begin our construction. Let $L/k$ be a finite Galois extension 
with Galois group $G := \Gal(L/k)$, and let $X$ be a smooth 
quasi-projective scheme over $L$. 

Recall that the Weil restriction 
$\mathcal{R}(X)$ of $X$ exists and is a smooth quasi-projective 
scheme over $k$. Moreover, there is an isomorphism
\[
\mathcal{R}(X)_{L} \cong \prod_{\sigma \in G} X^{\sigma},
\]
where $X^{\sigma}$ denotes the conjugate $L$-scheme obtained from $X$ 
via $\sigma \in G$ (see \S 1.1).

Let $\alpha \in H^{i}(X)(p)$ be a cohomology class. 
For each $\sigma \in G$, denote by 
\[
\alpha^{\sigma} \in H^{i}(X^{\sigma})(p)
\]
the class obtained by pulling back $\alpha$ along the canonical 
isomorphism $\sigma \colon X^{\sigma} \to X$.

We suppose that $n=|G|$. As a result, starting with $\alpha \in H^{i}(X)(p)$, we obtain for each 
$\sigma \in G$ a class $\alpha^{\sigma} \in H^{i}(X^{\sigma})(p)$. 
Taking their external (Künneth) product yields a cohomology class in $H^{ni}(\prod_{\sigma\in G}X^{\sigma})(np)$ and
we shall also denote this class by $\prod_{\sigma \in G} \alpha^{\sigma}$. 
To make it clear, we have used the Künneth product homomorphism
\[
\prod_{\sigma \in G} H^{i}(X^{\sigma})(p)
\longrightarrow
H^{i|G|}(\prod_{\sigma \in G} X^{\sigma})(np).
\]

Note that the cohomology class $\prod_{\sigma \in G} \alpha^{\sigma}$
is invariant under the natural $G$-action on 
\[
H^{*}\!\left(\prod_{\sigma \in G} X^{\sigma}\right)(np),
\]
where the action is induced by permuting the factors 
$\{X^{\sigma}\}_{\sigma \in G}$.

As a consequence of Theorem~\ref{t5} and the canonical isomorphism $\mathcal{R}(X)_{L} \cong \prod_{\sigma \in G} X^{\sigma}$,
there exists a unique cohomology class, denoted by $\mathcal{R}(\alpha) \in H^{ni}(\mathcal{R}(X))(np)$, 
such that
\[
f^{*}\mathcal{R}(\alpha)
=
\prod_{\sigma \in G} \alpha^{\sigma},
\]
where 
\[
f \colon \mathcal{R}(X)_{L} \to \mathcal{R}(X)
\]
is the natural projection morphism.

Combining our discussion above, we obtain the Weil restriction map \[\mathcal{R}:H^{*}(X)(p)\to H^{n*}(\mathcal{R}(X))(np)\]

Now we may state and prove our main compatibility theorem.

\begin{thm}
\label{ttttt}
    Let $L/k$ be a finite Galois extension of degree $n$ and $X$ be a smooth quasi-projective scheme over $L$. For any integer $p,q$, we have the following commutative diagram: \[
\begin{CD}
H^{q}(X,\Q(p)) @>{cl^{q}}>> H^{q}(X,\mathcal{E})(p) \\
@V{\mathcal{R}}VV @VV{\mathcal{R}}V \\
H^{nq}(X,\Q(np)) @>{cl^{nq}}>> H^{nq}(X,\mathcal{E})(np)
\end{CD}
\]where vertical maps are the Weil restriction maps we have constructed before and the horizontal maps are the motivic cycle class maps, given in Theorem \ref{tt7}.
\end{thm}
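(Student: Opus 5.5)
The plan is to copy the argument of the Proposition in \S3.2: replace the Weil restriction maps by their defining lifts to $\WR(X)_L\cong\prod_{\sigma\in G}X^{\sigma}$ and then use the structural compatibilities of the realization functor $R_{\mathcal{E}}$ from Theorem~\ref{tt7}(2). Write $f\colon \WR(X)_L\to\WR(X)$ for the projection. By Theorem~\ref{t5}, $f^{*}$ identifies $H^{nq}(\WR(X))(np)$ with the $G$-invariants of $H^{nq}(\prod_{\sigma}X^{\sigma})(np)$. This holds in both $DM_{gm}(k,\Q)$ and $D_{\mathbb{A}^1}(k,\mathcal{E})$, so in particular $f^{*}$ is injective on the $\mathcal{E}$-side. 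It therefore suffices to show
\[
f^{*}\bigl(cl^{nq}(\WR(\alpha))\bigr)=f^{*}\bigl(\WR(cl^{q}(\alpha))\bigr)
\]
for every $\alpha\in H^{q}(X,\Q(p))$. The right-hand side equals $\prod_{\sigma}cl^{q}(\alpha)^{\sigma}$ by the definition of $\WR$ on the $\mathcal{E}$-side.

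\textbf{Step 1: pullback.} The motivic cycle class map is induced by a triangulated functor applied to $M(-)$, and $R_{\mathcal{E}}(M(Y))$ is the motive of $Y$ in $D_{\mathbb{A}^1}(k,\mathcal{E})$. Hence $cl$ is natural with respect to morphisms of smooth schemes: $cl\circ g^{*}=g^{*}\circ cl$. Applying this to $f$ gives $f^{*}cl^{nq}(\WR(\alpha))=cl^{nq}(f^{*}\WR(\alpha))=cl^{nq}\bigl(\prod_{\sigma}\alpha^{\sigma}\bigr)$. Applying it to the canonical isomorphisms $\sigma\colon X^{\sigma}\to X$ gives $cl^{q}(\alpha^{\sigma})=cl^{q}(\alpha)^{\sigma}$.

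\textbf{Step 2: exterior products.} It remains to prove
\[
cl^{nq}\Bigl(\prod_{\sigma}\alpha^{\sigma}\Bigr)=\prod_{\sigma}cl^{q}(\alpha^{\sigma}).
\]
The Künneth product is $\mathrm{pr}_1^{*}(-)\cup\cdots\cup\mathrm{pr}_n^{*}(-)$, where the cup product comes from the diagonal, the monoidal structure, and the multiplication $\mathbb{1}(p)\otimes\mathbb{1}(p')\to\mathbb{1}(p+p')$, and likewise for $\mathcal{E}$. Since $R_{\mathcal{E}}$ is symmetric monoidal, it carries the unit $\Q$ to $\mathcal{E}$, preserves Tate twists, and respects the ring structure. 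So $cl$ commutes with cup products, and by Step 1 with the projections; this gives the displayed equality.

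Everything here takes place over the base $k$: $\WR(X)$ is a $k$-scheme, and $\prod_\sigma X^\sigma$ and each $X^{\sigma}$ are $k$-schemes through $L$. So $R_{\mathcal{E}}$ over $k$ applies throughout, and the twisted base change $\sigma$ is an isomorphism of $k$-schemes.

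\textbf{Main obstacle.} The delicate point is Step 2. One must verify that the monoidal structure of $R_{\mathcal{E}}$ is compatible with the multiplication maps on twists and with the identification $H^{q}(X,E)=H^{q}(X,\mathcal{E})$, so that the realized external product is exactly the Künneth map used to define $\WR$ on the $\mathcal{E}$-side. I would first try to deduce this formally, since in Déglise's framework $\mathcal{E}$ is a commutative ring spectrum and the realization is the free-module functor $-\otimes\mathcal{E}$. That functor is symmetric monoidal and sends the motivic ring structure to the $\mathcal{E}$-ring structure. With this in hand, the rest is diagram chasing.
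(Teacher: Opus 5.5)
Your proposal is correct and is essentially the paper's argument. You reduce, via the injectivity of $f^{*}$ from Theorem~\ref{t5}, to comparing the lifts $\prod_{\sigma}\alpha^{\sigma}$ on $\WR(X)_L\cong\prod_{\sigma\in G}X^{\sigma}$, and then use that the cycle class map comes from the symmetric monoidal triangulated functor $R_{\mathcal{E}}$, so it commutes with K\"unneth products. What you add is an explicit account of what the paper's two-line proof leaves implicit: naturality of $cl$ under pullback along $f$ and along the $k$-isomorphisms $\sigma\colon X^{\sigma}\to X$, and the description of the external product as $\mathrm{pr}_1^{*}(-)\cup\cdots\cup\mathrm{pr}_n^{*}(-)$.
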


\begin{proof}
    From our construction of the Weil restriction map, it suffices to prove the commutativity of the following diagram \[
\begin{CD}
H^{q}(X,\Q(p)) 
    @>{cl^{q}}>> 
H^{q}(X,\mathcal{E})(p) \\
@V{\prod_{\sigma\in G}\sigma}VV 
@VV{\prod_{\sigma\in G}\sigma}V \\
H^{nq}\!\left(\prod_{\sigma \in G}X^{\sigma},\Q(np)\right) 
    @>{cl^{nq}}>> 
H^{nq}\!\left(\prod_{\sigma \in G}X^{\sigma},\mathcal{E}\right)(np)
\end{CD}\]

However, the motivic cycle class map above is induced by the symmetric monoidal triangulated functor
\[R_\mathcal{E} : DM_{gm}(k,\Q) \longrightarrow D_{\mathbb{A}^{1}}(S,\mathcal{E}).\] (\cite[Theorem 2.7.4]{MR2900540})

In particular,  this functor is compatible with the Künneth product homomorphism in the level of cohomology groups. This concludes our proof.\end{proof}

\begin{rem}
We emphasize that the proof relies solely on the 
Galois descent result (Theorem~\ref{t5}) and Theorem~\ref{tt7}. 
A careful inspection shows that these results follow from 
the model category structure and the six-functor formalism of the 
underlying categories, both of which are intrinsic to their construction.
\end{rem}

We conclude the paper by recalling that, in the usual situation 
(see \cite[\S 3.3]{MR2900540}), the $\ell$-adic cohomology theory 
considered in \S 3 is in fact represented by a mixed Weil theory 
$E_{\mathrm{\acute{e}t},\ell}$ constructed by F.~Déglise 
(\cite[Theorem~3.3.5]{MR2900540}). 
Consequently, Theorem~\ref{ttttt} may be viewed as an extension 
of Theorem~\ref{tt}.

\bibliographystyle{acm}
\bibliography{Guangzhao}

@book {MR1045822,
    AUTHOR = {Bosch, Siegfried and L\"utkebohmert, Werner and Raynaud,
              Michel},
     TITLE = {N\'eron models},
    SERIES = {Ergebnisse der Mathematik und ihrer Grenzgebiete (3) [Results
              in Mathematics and Related Areas (3)]},
    VOLUME = {21},
 PUBLISHER = {Springer-Verlag, Berlin},
      YEAR = {1990},
     PAGES = {x+325},
      ISBN = {3-540-50587-3},
   MRCLASS = {14K15 (11G10 14L15)},
  MRNUMBER = {1045822},
MRREVIEWER = {James\ Milne},
       DOI = {10.1007/978-3-642-51438-8},
       URL = {https://doi-org.login.ezproxy.library.ualberta.ca/10.1007/978-3-642-51438-8},
}

@incollection {MR3727436,
    AUTHOR = {Grothendieck, A. and Deligne, P.},
     TITLE = {La classe de cohomologie associ\'ee \`a{} un cycle},
 BOOKTITLE = {Cohomologie \'etale},
    SERIES = {Lecture Notes in Math.},
    VOLUME = {569},
     PAGES = {129--153},
 PUBLISHER = {Springer, Berlin},
      YEAR = {1977},
      ISBN = {3-540-08066-X; 0-387-08066-X},
   MRCLASS = {14F20 (14C17)},
  MRNUMBER = {3727436},
}

@article {MR4949893,
    AUTHOR = {Karpenko, Nikita and Zhu, Guangzhao},
     TITLE = {Pullback and {W}eil transfer on {C}how groups},
   JOURNAL = {Indag. Math. (N.S.)},
  FJOURNAL = {Koninklijke Nederlandse Akademie van Wetenschappen.
              Indagationes Mathematicae. New Series},
    VOLUME = {36},
      YEAR = {2025},
    NUMBER = {5},
     PAGES = {1476--1480},
      ISSN = {0019-3577,1872-6100},
   MRCLASS = {14C25 (14C15)},
  MRNUMBER = {4949893},
       DOI = {10.1016/j.indag.2025.05.012},
       URL = {https://doi-org.login.ezproxy.library.ualberta.ca/10.1016/j.indag.2025.05.012},
}

@book {MR4904233,
    AUTHOR = {Milne, J. S.},
     TITLE = {\'Etale cohomology},
    SERIES = {Princeton Mathematical Series},
    VOLUME = {33},
      NOTE = {Reprint of [0559531]},
 PUBLISHER = {Princeton University Press, Princeton, NJ},
      YEAR = {2025] \copyright 1980},
     PAGES = {xiii+323},
      ISBN = {9780691273792; 9780691273785; 9780691273778},
   MRCLASS = {14-02 (14F20 18F99)},
  MRNUMBER = {4904233},
}

@article {MR1807268,
    AUTHOR = {Geisser, Thomas and Levine, Marc},
     TITLE = {The {B}loch-{K}ato conjecture and a theorem of
              {S}uslin-{V}oevodsky},
   JOURNAL = {J. Reine Angew. Math.},
  FJOURNAL = {Journal f\"ur die Reine und Angewandte Mathematik. [Crelle's
              Journal]},
    VOLUME = {530},
      YEAR = {2001},
     PAGES = {55--103},
      ISSN = {0075-4102,1435-5345},
   MRCLASS = {14F42 (14C35 19E15)},
  MRNUMBER = {1807268},
MRREVIEWER = {R.\ T.\ Hoobler},
       DOI = {10.1515/crll.2001.006},
       URL = {https://doi-org.login.ezproxy.library.ualberta.ca/10.1515/crll.2001.006},
}

@article {MR1809664,
    AUTHOR = {Karpenko, Nikita A.},
     TITLE = {Weil transfer of algebraic cycles},
   JOURNAL = {Indag. Math. (N.S.)},
  FJOURNAL = {Koninklijke Nederlandse Akademie van Wetenschappen.
              Indagationes Mathematicae. New Series},
    VOLUME = {11},
      YEAR = {2000},
    NUMBER = {1},
     PAGES = {73--86},
      ISSN = {0019-3577},
   MRCLASS = {14C25 (14C15)},
  MRNUMBER = {MR1809664 (2001j:14008)},
MRREVIEWER = {Hideyasu Sumihiro},
}

@incollection {MR4480537,
    AUTHOR = {Ji, Lena and Li, Shizhang and McFaddin, Patrick and Moore,
              Drew and Stevenson, Matthew},
     TITLE = {Weil restriction for schemes and beyond},
 BOOKTITLE = {Stacks {P}roject {E}xpository {C}ollection ({SPEC})},
    SERIES = {London Math. Soc. Lecture Note Ser.},
    VOLUME = {480},
     PAGES = {194--221},
 PUBLISHER = {Cambridge Univ. Press, Cambridge},
      YEAR = {2022},
      ISBN = {978-1-009-05485-0},
   MRCLASS = {14A20},
  MRNUMBER = {4480537},
MRREVIEWER = {Hao\ Sun},
}

@book {MR1644323,
    AUTHOR = {Fulton, William},
     TITLE = {Intersection theory},
    SERIES = {Ergebnisse der Mathematik und ihrer Grenzgebiete. 3. Folge. A
              Series of Modern Surveys in Mathematics [Results in
              Mathematics and Related Areas. 3rd Series. A Series of Modern
              Surveys in Mathematics]},
    VOLUME = {2},
   EDITION = {Second},
 PUBLISHER = {Springer-Verlag, Berlin},
      YEAR = {1998},
     PAGES = {xiv+470},
      ISBN = {3-540-62046-X; 0-387-98549-2},
   MRCLASS = {14C17 (14-02)},
  MRNUMBER = {1644323},
       DOI = {10.1007/978-1-4612-1700-8},
       URL = {https://doi-org.login.ezproxy.library.ualberta.ca/10.1007/978-1-4612-1700-8},
}

@book {MR2242284,
    AUTHOR = {Mazza, Carlo and Voevodsky, Vladimir and Weibel, Charles},
     TITLE = {Lecture notes on motivic cohomology},
    SERIES = {Clay Mathematics Monographs},
    VOLUME = {2},
 PUBLISHER = {American Mathematical Society, Providence, RI; Clay
              Mathematics Institute, Cambridge, MA},
      YEAR = {2006},
     PAGES = {xiv+216},
      ISBN = {978-0-8218-3847-1; 0-8218-3847-4},
   MRCLASS = {14F42 (19E15)},
  MRNUMBER = {2242284},
MRREVIEWER = {Thomas\ Geisser},
}

@book {MR3727161,
    AUTHOR = {Gille, Philippe and Szamuely, Tam\'as},
     TITLE = {Central simple algebras and {G}alois cohomology},
    SERIES = {Cambridge Studies in Advanced Mathematics},
    VOLUME = {165},
   EDITION = {Second},
 PUBLISHER = {Cambridge University Press, Cambridge},
      YEAR = {2017},
     PAGES = {xi+417},
      ISBN = {978-1-316-60988-0; 978-1-107-15637-1},
   MRCLASS = {16K20 (14C35 14F22 19C30)},
  MRNUMBER = {3727161},
}

@book {MR2286826,
    AUTHOR = {Levine, M. and Morel, F.},
     TITLE = {Algebraic cobordism},
    SERIES = {Springer Monographs in Mathematics},
 PUBLISHER = {Springer, Berlin},
      YEAR = {2007},
     PAGES = {xii+244},
      ISBN = {978-3-540-36822-9; 3-540-36822-1},
   MRCLASS = {14F43 (14C15 14L05)},
  MRNUMBER = {2286826},
MRREVIEWER = {Claudio\ Pedrini},
}

@article {MR2900540,
    AUTHOR = {Cisinski, Denis-Charles and D\'eglise, Fr\'ed\'eric},
     TITLE = {Mixed {W}eil cohomologies},
   JOURNAL = {Adv. Math.},
  FJOURNAL = {Advances in Mathematics},
    VOLUME = {230},
      YEAR = {2012},
    NUMBER = {1},
     PAGES = {55--130},
      ISSN = {0001-8708,1090-2082},
   MRCLASS = {14F43 (14C15 14F42 19E15)},
  MRNUMBER = {2900540},
MRREVIEWER = {Shoji\ Yokura},
       DOI = {10.1016/j.aim.2011.10.021},
       URL = {https://doi-org.login.ezproxy.library.ualberta.ca/10.1016/j.aim.2011.10.021},
}

@book {MR3971240,
    AUTHOR = {Cisinski, Denis-Charles and D\'eglise, Fr\'ed\'eric},
     TITLE = {Triangulated categories of mixed motives},
    SERIES = {Springer Monographs in Mathematics},
 PUBLISHER = {Springer, Cham},
      YEAR = {[2019] \copyright 2019},
     PAGES = {xlii+406},
      ISBN = {978-3-030-33241-9; 978-3-030-33242-6},
   MRCLASS = {14F42 (14C15 14C35 18G80 19D55)},
  MRNUMBER = {3971240},
MRREVIEWER = {Igor\ A.\ Rapinchuk},
       DOI = {10.1007/978-3-030-33242-6},
       URL = {https://doi-org.login.ezproxy.library.ualberta.ca/10.1007/978-3-030-33242-6},
}

\end{document}